\newcommand{\Hdef}{\mathfrak{H}} % Set of selection functions
\newcommand{\Act}[1]{\mathbb{A} \mathopen{} \left( #1 \right)\mathclose{}}    % Set of hashes of essentially active functions at a point
\newcommand{\Gen}{\mathbb{G}}    % Generator set of indicies 
\newcommand{\bigC}{C}    % Generator set of indicies 
\newcommand{\cS}{\mathcal{S}}    % Set of points with h = h_i
\newcommand{\cL}{\mathcal{L}}    % Level set
\newcommand{\cB}{\mathcal{B}}      % Ball
\newcommand{\cLmax}{\mathcal{L}_{\rm max}}      % Level set
\newcommand{\Deltamax}{\Delta_{\rm max}}
\newcommand{\etamax}{\eta_{\rm max}}
\newcommand{\R}{\mathbb R}       % Reals
\newcommand{\defined}{\triangleq}
\newcommand{\minimize}{\operatornamewithlimits{minimize}}
\newcommand{\trans}{{\mkern-1.5mu\mathsf{T}}}
\newcommand{\kappaief}{\kappa_{i, {\rm ef}}}
\newcommand{\kappaieg}{\kappa_{i, {\rm eg}}}
\newcommand{\kappaibmh}{\kappa_{i, {\rm mH}}}
\newcommand{\kappabmh}{\kappa_{{\rm H}}}
\newcommand{\kappaf}{\kappa_{\rm f}}
\newcommand{\kappag}{\kappa_{\rm g}}
\newcommand{\partialC}{\partial_{\rm C}}
\newcommand{\Lh}{L_h}
\newcommand{\kappad}{\kappa_{\rm d}}
\newcommand{\gammad}{\gamma_{\rm d}}
\newcommand{\gammai}{\gamma_{\rm i}}
\newcommand{\linerefa}[1]{Line~\ref{line:#1}}
\newcommand{\linerefb}[1]{Line~\hyperref[line:#1]{P.\ref{line:#1}}}
\newcommand{\cH}{\mathcal{H}} 
\newcommand{\LnF}{L_{\nabla \! F}}
\newcommand{\LnFi}{L_{\nabla \! F_i}}
\newcommand{\ds}{\displaystyle}
\newcommand{\jtext}[1]{[ \text{\footnotesize #1}]}
\newcommand{\tn}{\textnormal}
\newcommand{\Lnorm}[1]{\ell_{#1}} % For L norms
\newcommand{\Proj}{\mathbf{proj}}
\newcommand{\cop}[1]{\mathbf{co}\left(#1\right)} % Convex hull
\newcommand{\clp}[1]{\mathbf{cl}\left( #1 \right)} % Closure with parens
\newcommand{\intp}[1]{\mathbf{int}\left( #1 \right)} % Interior with parens
\renewcommand{\Im}[1]{\mathop{\mathbf{Im}}\left( #1 \right)} % Image with parens
\newcommand{\hdom}{\cop{\Im{\cLmax}}} % Domain of h
\newcommand{\thmref}[1]{{\rm Theorem~\ref{thm:#1}}}
\newcommand{\figref}[1]{{\rm Figure~\ref{fig:#1}}}
\newcommand{\lemref}[1]{{\rm Lemma~\ref{lem:#1}}}
\newcommand{\defref}[1]{{\rm Definition~\ref{def:#1}}}
\newcommand{\assref}[1]{{\rm Assumption~\ref{ass:#1}}}
\newcommand{\assreftwo}[2]{{\rm Assumptions~\ref{ass:#1} and \ref{ass:#2}}}
\newcommand{\assrefrange}[2]{{\rm Assumptions~\ref{ass:#1}--\ref{ass:#2}}}
\newcommand{\subassref}[2]{{\rm Assumption~\ref{ass:#1}.\ref{subass:#2}}}
\newcommand{\algref}[1]{{\rm Algorithm~\ref{alg:#1}}}
\newcommand{\secref}[1]{Section~\ref{sec:#1}}
\newtheorem{definition}{Definition}[section]
\newtheorem{theorem}{Theorem}[section]
\newtheorem{lemma}{Lemma}[section]
\newtheorem{remark}{Remark}[section]
\newcommand{\email}{\url}
\newtheorem{assump}{Assumption}
\renewcommand{\frac}{\tfrac}
\DeclareMathOperator*{\argmax}{arg\,max}
\title{Manifold Sampling for Optimizing Nonsmooth Nonconvex Compositions}
\author{Jeffrey Larson\thanks{Mathematics and Computer Science Division, Argonne National Laboratory, Lemont, IL 60439 (\email{jmlarson@anl.gov}, \email{mmenickelly@anl.gov}).}
\and Matt Menickelly\footnotemark[1]
\and Baoyu Zhou\thanks{Department of Industrial and Systems Engineering, Lehigh University, Bethlehem, PA 18015 (\email{baoyu.zhou@lehigh.edu}).}
}
\begin{document}

\maketitle

% REQUIRED
\begin{abstract}
  We propose a manifold sampling algorithm for minimizing a nonsmooth
  composition $f= h\circ F$, where we assume $h$ is nonsmooth and may
  be inexpensively computed in closed form and $F$ is smooth but its
  Jacobian may not be available. We additionally assume that the
  composition $h\circ F$ defines a continuous selection. Manifold
  sampling algorithms can be classified as model-based derivative-free
  methods, in that models of $F$ are combined with particularly
  sampled information about $h$ to yield local models for use within a
  trust-region framework. We demonstrate that cluster points of the
  sequence of iterates generated by the manifold sampling algorithm
  are Clarke stationary.  We consider the tractability of three
  particular subproblems generated by the manifold sampling algorithm
  and the extent to which inexact solutions to these subproblems may
  be tolerated. Numerical results demonstrate that manifold sampling
  as a derivative-free algorithm is competitive with state-of-the-art
  algorithms for nonsmooth optimization that utilize first-order
  information about $f$.
\end{abstract}

% % REQUIRED
% \begin{keywords}
%   Manifold Sampling, Gradient Sampling, Nonsmooth Optimization 
% \end{keywords}

% % REQUIRED
% \begin{AMS}
%   90C56, 49J52
% \end{AMS}

\section{Introduction}
We consider
unconstrained composite optimization problems of the form
\begin{equation} \label{eq:func_def}
  \minimize\left\{ f(x)\colon x\in\R^n \right\} \mbox{  when  }  f(x) \defined 
  h(F(x)),
\end{equation}
where 
$h \colon \R^p \to \R$ is a structured, possibly nonsmooth and nonconvex function
but its 
Clarke subdifferential $\partialC h(z)$ is known at any $z \in
\mathbb{R}^p$ in the domain of $h$
and the function $F \colon \R^n\to \R^p$, where $F =[F_1~F_2~\cdots~F_p]^\top$, is assumed
continuously differentiable. We are especially motivated by problems where $F$ is
expensive to evaluate and the Jacobian $\nabla F(x)$ is assumed unavailable.
Such problems arise, for example, when using a nonsmooth loss function
$h$ to measure the quality of output from a function $F$ that depends on some
expensive simulation. 

\emph{Manifold sampling} is an approach for solving~\eqref{eq:func_def} that
constructs
models of $F$ and
combines these models using particular sampled information about $h$. 
Manifold sampling was originally developed for the specific case where $h$
is the $\ell_1$-norm~\cite{LMW16} and was later generalized to the case where $h$ is
a potentially nonconvex \emph{continuous selection} of affine functions~\cite{KLW18}. 
\begin{definition}\label{def:continuous_selection}

  A function $h$ is a \emph{continuous selection} on $U\subseteq\R^p$ if it is
  continuous on $U$ and
  $
  h(z) \in \{h_j(z)\colon h_j \in\Hdef\}$ for all $z \in U$,
  where $h_j:\R^p \to \R$ and $\Hdef$ is a finite set of \emph{selection functions}.
\end{definition}

While the manifold sampling algorithm presented in~\cite{KLW18} required
 every $h_j\in\Hdef$ to be an affine function, this work considers the 
case where each $h_j\in \Hdef$ is assumed only to be Lipschitz continuous and Lipschitz
continuously differentiable. Past manifold sampling algorithms made the
implicit assumption that each selection function was uniquely represented by
its gradient; such an assumption does not hold in this more general case.

As in previous manifold sampling algorithms, 
the unavailability of $\nabla F(x)$ at a given $x \in \mathbb{R}^n$ is handled through standard techniques of 
model-based derivative-free
optimization, that is, via the construction of models $m^{F_i}(x)$ of $F_i(x)$.  
We then employ
\begin{equation}\label{eq:Mdef}
M(x) \defined \begin{bmatrix} m^{F_1}(x)  \cdots m^{F_p}(x) \end{bmatrix}^\trans  \text{ and } \nabla M(x) \defined \begin{bmatrix} \nabla m^{F_1}(x)  \cdots  \nabla m^{F_p}(x) \end{bmatrix}
\end{equation}
as approximations of $F(x)$ and $\nabla F(x)$, respectively.  
In the present work, we will use not only
knowledge of the functions $h_j \in \Hdef$ that define $h$ but also
$\nabla h_j(z)$, the gradients of linearizations of $h_j$ at points $z$, in order to
update iterates.  For each continuously differentiable $h_j\in\Hdef$, 
the composition $h_j(F(x))$ is also continuously differentiable, and from a simple
application of the chain rule 
$$\nabla h_j(F(x)) = \nabla F(x) \nabla h_j(F(x))
\approx \nabla M(x) \nabla h_j(F(x)).$$
Thus, when considering topologically connected point sets satisfying
\begin{equation}
\label{eq:manifold} 
\{x\in\R^n\colon h(F(x))=h_j(F(x))\},
\end{equation} 
we can use standard derivative-free techniques to produce sufficiently accurate models
of $h_j\circ F$ and hence sufficiently accurate models of $h\circ F$ when
restricted to~\eqref{eq:manifold}.
\emph{Manifolds}\footnote{
Using a result such as Theorem 2 of~\cite{Rockafellar2003}, one can show that
functions $h$ satisfying \defref{continuous_selection} admit a Whitney
stratification, and the resulting strata could be interpreted as manifolds in \eqref{eq:manifold}.
See also~\cite{Bolte2007,Davis2020,Ioffe2009}. Accounting for such a
manifold representation would be cumbersome, however, and hence we use the notation of 
continuous selections.}
in this manuscript refer to any topologically connected set of the
form~\eqref{eq:manifold} and hence refer to regions where $h_j\circ M$ might
be used as models of $f = h\circ F$. 
In each manifold sampling iteration, we construct models of $F$ and
make use of $h_j\circ F$ for some $h_j\in \Hdef$.  These models are then
assembled in a particular manner to suggest search directions within an
iterative (trust-region) method.

Our work is not in the scope of optimization of functions defined on
 general Riemannian manifolds, which is an active, but distinct, area of research.

\subsection{Literature review}

Unconstrained nonsmooth \emph{noncomposite convex} optimization given a subgradient oracle has become a
classical method;
 see, for instance,~\cite{rockafellar2009variational}
for a textbook treatment of first-order subgradient methods. 
A historically popular class of methods for the solution for convex nonsmooth optimization methods has been
bundle methods; see~\cite{makela2001} for a survey from 2001. 
Given a \emph{noncomposite nonconvex} objective function but still assuming access to a subgradient oracle, 
various solution methodologies have been proposed.  
Subgradient methods tailored to the nonconvex setting are investigated in, for instance,~\cite{Bagirov2008a, Bagirov2013},
while bundle methods suitable for the nonconvex setting have also been developed (see, e.g.,~\cite{hare2010redistributed, hare2016proximal, Kiwiel96}). 
The approach of~\cite{pang1991minimization} iteratively constructs convex
second-order models of the objective and employs a line search for
globalization; in~\cite{qi1994trust} similar models of the objective are constructed,
but a trust region is employed for globalization, yielding convergence of a
subsequence to a Dini stationary point. Difference-of-convex approaches for
nonsmooth composite objectives 
have also been studied in recent works~\cite{Cui2018,Liu2020}.

For unconstrained nonsmooth \emph{noncomposite nonconvex} optimization when in the \emph{derivative-free}
setting, Bagirov, Karas\"{o}zen, and Sezer~\cite{Bagirov2007} proposed the so-called discrete gradient method,
which computes
 approximate subgradients for use in a subgradient descent framework; see also~\cite{riis2018geometric}.
Direct-search methods in derivative-free optimization have been historically
concerned with convergence to Clarke stationary points and are hence suitable for nonsmooth optimization.
See the book~\cite{AudetHare2017}
for an excellent treatment of this subject. 
Bundle methods, as well as trust-region bundle methods, have also been considered in the 
derivative-free setting; see, for instance,~\cite{karmitsa2012limited,liuzzi2019trust}.  

Unconstrained nonsmooth \emph{composite} optimization
problems of the form \eqref{eq:func_def} have been given special
attention in the literature. Works from the 1980s 
provide fundamental analyses for the
case where $h$ is convex and $\nabla F$ is available~\cite{fletcher1982model, fletcher1982second,
womersley1986algorithm,
yuan1985conditions, yuan1985superlinear}.
In a \emph{derivative-free} setting, the authors in~\cite{garmanjani2016trust,
grapiglia2016derivative} analyze algorithms for composite optimization where $h$ is a general convex function but
$\nabla F$ is not available.  To the best of our knowledge,~\cite{KLW18} is the
only work that allows for $h$ to be nonconvex and does not require access to $\nabla F$.  

The case of $h$ in \eqref{eq:func_def}
 being a general convex function has enjoyed special attention~\cite{Cartis2011, fletcher1982model, fletcher1982second,
garmanjani2016trust, grapiglia2016derivative, yuan1985conditions}.
Convex functions $h$ are natural in many applications.
For instance, they are frequently used
as penalty functions, such as $\|\cdot\|_p$, 
and $\|\max(\cdot,0)\|_p$ (an exact penalty for inequality
constraints).
Thus, in the literature one sometimes sees treatments of particular convex functions $h$.
For example, in~\cite{CFMR2018, Wild14}, $h$ is fixed as $\|\cdot\|_2^2$,
while in ~\cite{Hare2017b, menickelly2020, womersley1986algorithm}, $h$ is fixed
as $\max(\cdot)$.
In~\cite{fletcher1982second, yuan1985superlinear}, $h$ is fixed as a polyhedral convex function,
which is a convex piecewise-affine function. The work in~\cite{KLW18} removes the convexity requirement to 
address functions $h$ that are assumed only to be continuous and piecewise-affine. 

We note that the 
algorithmic differentiation (AD) community has analyzed methods for computing
generalized derivatives of piecewise-smooth functions for use in gradient-based
nonsmooth optimization methods.
The authors of~\cite{Griewank2016a} consider the ``abs-normal form" (see, e.g.,~\cite{Griewank2016b, Walther2019})
of local nonsmooth models of a class of piecewise-smooth functions 
that commonly appear in computer codes, and they 
demonstrate how standard AD tools can be extended to
derive these models, which are then used to design bundle-type methods. The
approach in~\cite{Mitsos2009} considers a limited class of composite functions
and uses a
forward-AD method to compute a subgradient of such functions for use in
McCormick relaxations, which can be solved by global optimization methods;
the resulting relaxation is more amenable to global optimization methods. 
A state-of-the-art forward-AD mode for computing generalized
derivatives of composite piecewise-differentiable functions is presented
in~\cite{Khan2015}. 
A reverse-AD method for computing the subgradients of the same class of
functions as considered in~\cite{Mitsos2009} is derived in~\cite{beckers2012}.
In~\cite{Khan2013}, the authors demonstrate a practical reverse-AD mode for
computing generalized derivatives of composite piecewise-differentiable
functions. 

Gradient sampling methods~\cite{burke2020gradient,burke2002approximating,burke2005robust,
curtis2013adaptive, Kiwiel2007} are designed for noncomposite nonsmooth nonconvex optimization.
We pay special attention to them because they are closest in
spirit to manifold sampling algorithms. 
Gradient sampling methods were originally designed~\cite{burke2002approximating,burke2005robust} for
the minimization of locally Lipschitz functions, a broader 
class of nonsmooth functions than those analyzed in the present paper.
Gradient sampling methods compute random samples of gradients at multiple points in an
$\epsilon$-neighborhood of a current point $x$ in order to approximate the
Clarke $\epsilon$-subdifferential of $f(x)$, $\partial_{\epsilon} f(x)$.
In contrast to gradient sampling, we note that 
manifold sampling does not require access to gradient information and does not rely on
randomization.
By exploiting knowledge of the structure of the objective in~\eqref{eq:func_def}
(in particular, the finiteness of the set $\Hdef$ defining $h$), 
manifold sampling does not depend on random sampling to identify the presence of 
distinct manifolds in a neighborhood of $x$.  
We note that Kiwiel~\cite{KIWIEL2010} proposed a gradient sampling method for the
derivative-free setting by computing approximate (finite-difference) gradients,
but the method still depends on randomization.

\subsection{Real-world example}
While convex or piecewise-linear forms of $h$ may be common, they are
far from exclusive. As one example, particle beamline scientists often seek
operational parameters that produce a tightly bunched beam at some point in
space. This allows a sample to be placed at this point in space in order to be
hit by the tightest-possible beam. The spread of the beam is measured by the
\emph{normalized emittance}. A beamline simulation is run for a given set of
operational parameters $x$, producing three vectors of simulation output $F_{1,j}(x)$, $F_{2,j}(x)$,
and $F_{3,j}(x)$ for each position $j$ in a finite set $J$. The objective is
to minimize $\underset{j \in J}{\min} \sqrt{F_{1,j}(x) F_{2,j}(x) - F_{3,j}(x)^2}$.
See~\cite{Wiedemann2015} for greater detail.
In this case, we have a nonconvex, not piecewise-linear, nonsmooth function
$
h(z)
\defined \underset{j \in J}{\min} \left\{ h_j(z) \defined \sqrt{z_{1,j} z_{2,j}
- z_{3,j}^2} \right\}.
$
\subsection{Notation and definitions}
All norms are assumed to be $\Lnorm{2}$ norms.
The \emph{closure}, \emph{interior}, and \emph{convex hull} of a set
$\cS$ are denoted $\clp{\cS}$, $\intp{\cS}$, and
$\cop{\cS}$, respectively. 
The \emph{image} of a set $\cS$ through $F$ is 
$\Im{\cS} \defined \{ F(x)\colon x\in\cS \}$.
We define $\cB(x;\Delta) = \{ y\colon \left\| x - y \right\| \le \Delta  \}$. 
In the rest of this paper, we use $0$ to denote both a scalar and a
vector with a zero in each entry, depending on context.
We define $e$ to be the vector with a one in each entry.

We say a function $f$ is \emph{Lipschitz continuous with constant $L_f$ on $\Omega \subset
\mathbb{R}^n$} if $\left| f(x) - f(y) \right| \le L_f \left\| x-y \right\|$ for
all $x,y \in \Omega$. 
Recall that if $f$ is also continuously differentiable, then Lipschitz continuity immediately implies that
$\|\nabla f(x)\|\leq L_f$ for all $x\in \intp{\Omega}$.
Similarly, $f$ has a \emph{Lipschitz continuous gradient with
constant $L_{\nabla \! f}$ on $\Omega$} if 
$\left\| \nabla f(x) - \nabla f(y) \right\| \le L_{\nabla \! f} \left\| x-y \right\|$ for
all $x,y \in \Omega$.

The \emph{generalized Clarke subdifferential} of a locally Lipschitz
continuous function $f$ at a point $x$ is defined as 
$
  \partialC f(x) \defined \cop{\left\{ \lim_{y^j\to x} \nabla f(y^j)\colon  y^j \in \mathcal{D} \right\}},
  $
where $\mathcal{D}$ is the set of points where $f$ is differentiable.
That is, $\partialC f(x)$ is the convex hull of the set of all limiting
gradients from differentiable points that converge to $x$.
A point $x$ is called a \emph{Clarke stationary} point of $f$
if $0\in \partialC f(x)$.

\subsection{Organization}
\secref{assumptions} collects assumptions about $f$, $h$, $F$, and the models
approximating $F_i$. 
\secref{algorithm_overview} presents the essential components of 
the manifold sampling algorithm.
\secref{prelims} contains lemmas concerning the essential components of the 
manifold sampling algorithm that will be used in later convergence analysis. 
We highlight \lemref{candidate_search_lemma}
and \lemref{approx_solve}, which relate to the feasibility of two particular
subproblems encountered during each manifold sampling iteration.
In \secref{analysis2}, we demonstrate our main theoretical result, namely, that all cluster points of the sequence of iterates
produced by the manifold sampling algorithm are Clarke stationary.
\secref{tests} presents numerical experiments comparing an implementation of the proposed manifold
sampling algorithm with other methods for nonsmooth optimization, all of which are given
access to values of $\nabla f(x)$.
\secref{conclusion} contains some concluding remarks and discussion. 

\section{Problem Setting}\label{sec:assumptions}

We now present background material and assumptions.  
For an initial iterate $x^0 \in \mathbb{R}^n$, we first define $\cL(x^0) \defined \left\{ x\colon f(x) \le f(x^0) \right\}$.  Moreover,
for a constant $\Delta_{\max} > 0$ and point $x^0$, define 
\begin{equation}\label{eq:cLmaxdef}
\cLmax \defined \ds \bigcup_{x \in \cL(x^0)} \cB(x;\Delta_{\max}).
\end{equation}
We may now introduce some assumptions about the
objective function $f\colon\R^n\to\R$.

\begin{assump}\label{ass:f} We assume the following about $f$ and $F$.
  \begin{enumerate}[label=\tn{\textbf{\Alph*}.},ref=\tn{\Alph*},leftmargin=20pt]
    \item For a point $x^0 \in \R^n$, the set $\cL(x^0)$
     is bounded. \label{subass:level}
      
    \item Each $F_i$ 
     is Lipschitz continuous with 
      constant $L_{F_i}$ on $\cLmax$. \label{subass:Fval}

    \item Each $F_i$ 
    is Lipschitz gradient continuous with
      constant $\LnFi$ on $\cLmax$.\label{subass:Fgrad}
  \end{enumerate}
\end{assump}

From \subassref{f}{level}, we may conclude that $\cLmax$ is bounded.
In our analysis, we will demonstrate in \lemref{delta_to_0_linear} that all points evaluated by the manifold
sampling algorithm are contained in $\cLmax$.

We next define what it means for a given selection function $h_j$ to be
\emph{essentially active} in a continuous selection; see~\cite{Scholtes2012} for 
a deeper treatment of continuous selections. 

\begin{definition} \label{def:pc1manifold}
  Suppose $h$ is a continuous selection. We define
  \begin{align*}
    \cS_j \defined \left\{ z\colon h(z) = h_j(z) \right\}, \quad 
    \tilde{\cS}_j \defined \clp{\intp{\cS_{j}}}, \quad
    \Act{z} \defined \left\{ j\colon z \in \tilde{\cS}_j \right\}.
  \end{align*}
  We refer to elements of $\mathbb{A}(z)$ as \emph{essentially active indices}.
  We refer to any
  $h_j$ for which $j\in \mathbb{A}(z)$ as an \emph{essentially active selection
  function for $h$ at $z$}.  
  Moreover, the function $h_j$ is
  \emph{essentially active at $z\in\hdom$}, provided $h_j$ is an essentially active selection
  function for $h$ at $z$. 
 For a finite set $\mathbb{Z}$, let $\Act{\mathbb{Z}} = \ds\bigcup_{z \in \mathbb{Z}} \Act{z}$.
\end{definition}

Fundamentally, essentially active selection functions are those that describe
the behavior of $h$ near a point of interest. With this definition, we can make
the following assumptions on $h$ and the selection functions defining it.

\begin{assump}\label{ass:h} We assume the following about $h$.
  \begin{enumerate}[label=\tn{\textbf{\Alph*}.},ref=\tn{\Alph*},leftmargin=20pt]
    \item The function $h$ satisfies \defref{continuous_selection}. \label{subass:h}

    \item The set $\mathbb{A}(z)$ of essentially active indices for $h$ at $z$ is computable for
      any $z \in \hdom$. \label{subass:Ieh}
  \end{enumerate}
For all $z,z' \in \hdom$ and for each $h_j \in \Hdef$, we assume the following:
  \begin{enumerate}[label=\tn{\textbf{\Alph*}.},ref=\tn{\Alph*},leftmargin=20pt,resume]
    \item There exists $L_{h_j}$ such that $|h_j(z) - h_j(z')|\leq L_{h_j}\|z - z'\|$.  \label{subass:func}
    \item There exists $L_{\nabla h_j}$ such that $\|\nabla h_j(z) - \nabla h_j(z')\|\leq L_{\nabla h_j}\|z - z'\|$. \label{subass:grad}
  \end{enumerate}
\end{assump}

The models that we use to approximate the components $F_i$ of $F$ must be
sufficiently accurate. 
As is standard in model-based derivative-free optimization, we employ full
linearity as our standard of accuracy. 

\begin{definition} \label{def:flmodels}
  A function $m^{F_i} \colon \R^n \to \R$ is said to be a \emph{fully linear}
  model of $F_i$ on $\cB(x;\Delta)$ with constants $\kappaief$ and
  $\kappaieg$, provided 
  \begin{equation*}
    \begin{array}{rl}
      \left|F_i(x+s) - m^{F_i}(x+s)\right|\leq \kappaief \Delta^2 & \forall s\in \cB(0;\Delta),\\
      \left\|\nabla F_i(x+s) - \nabla m^{F_i}(x+s)\right\|\leq \kappaieg\Delta & \forall s\in \cB(0;\Delta).
    \end{array}
  \end{equation*}
\end{definition}

See, for instance,~\cite{LMW2019AN, Conn2009a} for a deeper treatment of
the construction of fully linear models. 
A fully linear model of a function can be constructed on $\cB(x;\Delta)$, for example, by
interpolating function values at $n+1$ well-poised points including $x$.
Note in this example, however, that we do not need $p \times (n+1)$
evaluations of $F$ in order to construct $p$ fully linear models $m^{F_i}$
since we assume that an evaluation of
$F(x)$ returns $F_1(x),\dots,F_p(x)$ simultaneously. 
Ultimately, the algorithm will use $\nabla m^{F_i}$ in
place of an unavailable $\nabla F_i$.  Each iteration of the manifold sampling algorithm
will ensure that the models $m^{F_i}$ are fully linear models of $F_i$ on
$\cB(x^k;\Delta_k)$, where $x^k$ and $\Delta_k$ correspond to the current point and
trust-region radius at iteration $k$.

\begin{assump}\label{ass:flmodels}
There exist constants $\{\kappa_{1,\mathrm{ef}},\dots,\kappa_{p,\mathrm{ef}}\}$
and $\{\kappa_{1,\mathrm{eg}},\dots,\kappa_{p,\mathrm{eg}}\}$, independent of $k$, such that
  for each component function $\{F_1, \ldots, F_p\}$ of $F$,
  each model $\{m^{F_1}, \ldots, m^{F_p}\}$ is
  fully linear on $\cB(x^k;\Delta_k)$ with the corresponding constants. 
  Moreover, each $m^{F_i}$ is twice continuously differentiable, and there exists $\kappaibmh$ so that $\|\nabla^2
  m^{F_i}(x)\|\leq \kappaibmh$ for all $x\in\cLmax$.  
\end{assump}

For ease of presentation and analysis, we define the following 
constants. 
\begin{definition}\label{def:constants}
  For the constants in \subassref{f}{Fgrad}, 
  \subassref{h}{func}, \defref{flmodels}, and \assref{flmodels}, define 
       $L_F \defined \sqrt{\sum_{i=1}^p L_{F_i}^2}$, 
       $\LnF \defined \sqrt{\sum_{i=1}^p \LnFi^2}$,
       $\Lh \defined \max_{j\in \{1,\ldots,\left| \Hdef \right|\}}\left\{ L_{h_j} \right\}$,
       $L_{\nabla h} \defined \max_{j\in \{1,\ldots,\left| \Hdef \right|\}}\left\{ L_{\nabla h_j} \right\}$,
       $ \kappaf \defined \sum_{i=1}^p \kappaief$, 
       $ \kappag \defined \sum_{i=1}^p \kappaieg$, 
       $ \kappabmh \defined \sum_{i=1}^p \kappaibmh$, and  $\bigC \defined (2L_h\kappag + 2L_{\nabla h}L_F^2 + L_h \LnF)$.
\end{definition}
Proposition~4.1.2 in~\cite{Scholtes2012} demonstrates that $L_h$ is in fact a
Lipschitz constant for $h$, and so our definition above is simply fixing a particular value
of the Lipschitz constant.

\section{Manifold Sampling for Piecewise-Smooth Compositions} \label{sec:algorithm_overview}
We now outline the essential components of the manifold sampling algorithm
(presented in \algref{manifoldsampling} in \secref{alg_state})
for solving problems of the form~\eqref{eq:func_def} satisfying \assref{f} and \assref{h}.
We draw special attention to \secref{master_model}, \secref{suff_dec}, and \secref{loop},
which respectively introduce three subproblems that must be solved in each iteration of the
manifold sampling algorithm.

\subsection{Sample set $\mathbb{Z}^k$, gradient set $\mathbb{D}^k$, and generator set $\Gen^k$}
 \label{sec:gensets}
Manifold sampling is an iterative method that
builds component models $m^{F_i}$ of each $F_i$ around the current point $x^k$.
We place the first-order terms of each model in
column $i$ of the matrix $\nabla M(x^k) \in
\R^{n \times p}$ as in~\eqref{eq:Mdef}.

As in past manifold sampling algorithms, $\nabla M(x^k)$
will be combined with gradients of selection functions to yield generalized
gradients for $f$ as in \eqref{eq:func_def}. 
But additional care must be taken when $h_j$ is assumed only to be smooth, and
not assumed to be piecewise-affine. 
Because the value of $\nabla h_j$ need not be unique on a given manifold, the
manifold sampling algorithm will maintain a
finite \emph{sample set} of points $\mathbb{Z}^k \subset \R^p$
representing points where different selection functions have been determined to be active.
In the algorithm, $\mathbb{Z}^k$ will include both the vector values of
$F(y)$ for previously evaluated points $y \in  \cB(x^k;\Delta_k)$ and points of the
form $\alpha F(x^k) + (1 - \alpha) F(y)$ for $\alpha \in [0,1]$. 
In other words, it may not always be the case that $\mathbb{Z}^k\subset\Im{\cLmax}$,
but it is always the case that $\mathbb{Z}^k\subset\hdom$. 

The sample set $\mathbb{Z}^k$ yields a \emph{set of gradients of linearizations
of active selection functions}, $\mathbb{D}^k$. Combining elements of
$\mathbb{D}^k$ with $\nabla M(x^k)$ produces a \emph{generator set}
$\Gen^k$ with elements of the form $\nabla M(x^k) \nabla h_j(z)$ for suitable
selection functions $h_j$ that are active within $\cB(F(x^k);L_F\Delta_k)$.
The set $\cop{\Gen^k}$ can then be treated  as a particular
approximation to $\partialC f(x^k)$. These sets are defined in the following.

\begin{definition}\label{def:useful_sets}
  Let $\mathbb{D}^k$ denote the set of gradients of linearizations of selection
  functions corresponding to the finite sample set $\mathbb{Z}^k \subset \mathbb{R}^p$. That is,\\
    $\mathbb{D}^k = \{ \nabla h_j(z)\colon z\in\mathbb{Z}^k,\; j\in\mathbb{A}(z) \}$.
  Let $\Gen^k$ denote the set of generators corresponding to the sample set $\mathbb{Z}^k$. That is,
	$\Gen^k = \{ \nabla M(x^k)\nabla h_j(z)\colon z\in\mathbb{Z}^k,\; j\in\mathbb{A}(z) \}$.
  Let $D^k$ be the matrix with columns that are the elements of $\mathbb{D}^k$. Let
  $G^k$ be the matrix with columns that are the elements of $\Gen^k$. That is, $G^k =
  \nabla M(x^k)D^k$.
\end{definition}

From \defref{useful_sets}, we see that a generator set $\Gen^k$ is a sample of approximate gradients
from various manifolds of the continuous selection
 that are potentially active at (or relatively near) $F(x^k)$. 
Ultimately, the minimum-norm
element of $\cop{\Gen^k}$, which is the projection of the origin to $\cop{\Gen^k}$, denoted by
\begin{equation} \label{eq:modelg_general}
  g^k\defined \Proj\left(0,
    \cop{\Gen^k}\right) \in \cop{\Gen^k},
\end{equation}
will be employed as the gradient of a smooth master model that will be minimized in a
trust-region $\cB(x^k;\Delta_k)$ to suggest trial points.
\lemref{weird_v_approx} will demonstrate that $\cop{\Gen^k}$ can approximate $\partialC
f(x^k)$ sufficiently well in order to guarantee that manifold
sampling converges to Clarke stationary points. 
When all the elements of $\mathbb{Z}^k$ are sufficiently close to $F(x^k)$
and $\|g^k\|\approx 0$, \lemref{weird_v_approx} suggests that $x^k$ is a Clarke
stationary point. This reasoning provides the rough roadmap for our analysis.

We see in \defref{useful_sets} that different choices of sample sets $\mathbb{Z}^k$ induce different
generator sets
$\Gen^k$. 
While the manifold sampling algorithm permits some flexibility in the selection of $\mathbb{Z}^k$,
our convergence results require some minimal assumptions on the construction of $\mathbb{Z}^k$.
\begin{assump} \label{ass:generators}
  At every iteration $k$ of \algref{manifoldsampling}, the finite set $\mathbb{Z}^k$ satisfies
  $F(x^k) \subseteq \mathbb{Z}^k \subset \cB( F(x^k); L_F\Delta_k )$
for $L_F$ as in \defref{constants}.
\end{assump}
Although \algref{manifoldsampling} does not assume access to $L_F$, ensuring that 
$\mathbb{Z}^k$ contains only elements of the form $F(y)$ for $y \in
\cB(x^k;\Delta_k)$ or $\alpha F(x^k) + (1-\alpha)(F(y))$ for $\alpha \in [0,1]$
will ensure that \assref{generators} is satisfied.

An ideal sample set $\mathbb{Z}^k$ would be one such that
$\Act{\mathbb{Z}^k} = \bigcup_{y\in\cB(x^k;\Delta_k)} \mathbb{A}(F(y))$;
that is, $\mathbb{Z}^k$ would contain points in $\mathbb{R}^p$ so that all selection
functions that define $h$ in the image of $\cB(x^k;\Delta_k)$ under $F$ are identified.
Fortunately, identifying all active selection functions near $x^k$ is not
necessary; this is indeed fortunate because ensuring that all such active selection functions 
have been identified may be impossible in practice. 
In our implementation, $\mathbb{Z}^k$ is initialized in either of the following 
ways, both of which 
are consistent with \assref{generators} and are practical:
 $\mathbb{Z}^k = \{  F(x^k) \}$ or  $\mathbb{Z}^k = \{ F(y)\colon y \in Y \subset \cB(x^k;\Delta_k)\}$.
We note that in the second case, additional evaluations of $F$ are not necessary; 
it is
sufficient to let $Y$ consist of points in $\cB(x^k;\Delta_k)$ where $F$ has
been evaluated during previous iterations of the algorithm.

\subsection{Smooth master model}\label{sec:master_model}
As previously stated, we want the gradient of the smooth master model to satisfy 
$g^k = \Proj(0,\cop{\Gen^k})$. This projection can be computed by solving the convex quadratic
optimization problem
\begin{equation}
   \label{eq:convex_qp}
    \begin{aligned}
    & \minimize_{\lambda} & & \lambda^\trans (G^k)^\trans G^k\lambda 
    & \text{subject to} & & e^\trans \lambda = 1, \; \lambda\geq 0. 
    \end{aligned}
\end{equation}
Employing $G^k$ and $D^k$ defined in \defref{useful_sets} and $\lambda^*$
solving~\eqref{eq:convex_qp}, we can define 
\begin{equation} \label{eq:gk_dk_def}
  g^k \defined G^k\lambda^* \qquad \mbox{ and } \qquad d^k \defined D^k\lambda^*.
\end{equation}
While there may not be a unique $\lambda^*$ solving~\eqref{eq:convex_qp},
the values of $g^k$ and $d^k$ in~\eqref{eq:gk_dk_def} are necessarily unique.

We employ $[d^k]_i$, the $i$th element of $d^k$, as weights attached to the $p$ smooth component models
$m^{F_i}_k$ to yield the smooth \emph{master model}
\begin{equation}
 m^f_k(x) \defined \sum_{i=1}^p [d^k]_i m^{F_i}(x).
 \label{eq:mastermodel}
\end{equation}
By construction, $\nabla m^f_k(x^k) = \ds\sum_{i=1}^p [d^k]_i \nabla m^{F_i}(x^k) =\nabla M(x^k)D^k\lambda^* 
= G^k\lambda^* = g^k.$
We draw attention to the fact that the master model $m^{f}_k$ is \emph{not} assumed to be an accurate model of $f$
in a Taylor approximation sense. 
However, \lemref{weird_v_approx} demonstrates a result resembling one direction
of the definition of ``order-1 subgradient accuracy,'' as in~\cite[Section 19.4]{AudetHare2020}.

\subsection{Sufficient decrease condition}\label{sec:suff_dec}
In iteration $k$ of the manifold sampling algorithm,
 the master model $m^f_k$ in~\eqref{eq:mastermodel} will be employed in the trust-region
subproblem 
\begin{equation}
  \minimize_{s\in \cB(0;\Delta_k)}\;  m^f_k(x^k+s).
 \label{eq:trsp}
\end{equation}
As with traditional trust-region methods, the problem~\eqref{eq:trsp} does not have to be
solved exactly. Rather, an approximate solution $s^k$
of~\eqref{eq:trsp} can be used, provided it satisfies a sufficient decrease
condition quantified by an algorithmic parameter $\kappad \in (0,1)$, namely,
\begin{equation} \label{eq:fraction_cauchy_decrease}
  \left\langle M(x^k) - M(x^k+s^k), d^k \right\rangle \ge
  \frac{\kappad}{2}\|g^k\|\min\left\{\Delta_k,\frac{\|g^k\|}{L_h\kappabmh}\right\}.
\end{equation}
(If $h$ is constant on $\cLmax$ or all of the models $m^{F_i}$ are linear and $L_h\kappabmh = 0$,
$\frac{\|g^k\|}{L_h\kappabmh}\defined \infty $.)

Note that the sufficient decrease condition~\eqref{eq:fraction_cauchy_decrease}
differs from traditional conditions employed in classical trust-region methods
(see, e.g.,~\cite[Theorem~6.3.3]{Trmbook}). Instead of measuring the decrease
in $m^f_k$ between $x^k$ and $x^k+s^k$, the left-hand side of~\eqref{eq:fraction_cauchy_decrease}
measures the decrease in terms of the specific convex combination, defined by $d^k$, of the gradients of 
selection functions at points near $F(x^k)$. The sufficient decrease
condition~\eqref{eq:fraction_cauchy_decrease} extends the approach
from~\cite{LMW16}, where 
$h = \left\| \cdot \right\|_1$ and decrease is measured by using the sign
pattern of $F$ at $x^k + s^k$.
We will demonstrate in \lemref{approx_solve} that an $s^k$
satisfying~\eqref{eq:fraction_cauchy_decrease} always exists and can be found
with a particular step choice. 

\begin{remark}[on the various uses of $\Delta_k$]
Note that the trust-region subproblem \eqref{eq:trsp},
\assref{flmodels} on model quality, and \assref{generators} 
on allowable sample sets
all involve the parameter $\Delta_k$. 
This intentional conflation of the use of $\Delta_k$ greatly facilitates our analysis. 
In analyses of derivative-free model-based methods, one commonly sees $\Delta_k$ controlling both trust-region radii
and model accuracy.
In manifold sampling, $\Delta_k$ plays a third role of the $\epsilon$ parameter
in the approximation of the Clarke $\epsilon$-subdifferential induced by $\Gen^k$. 
Whereas practical implementations of derivative-free model-based methods sometimes decouple $\Delta_k$ into
separate parameters controlling step sizes and model accuracy, one could also consider a third decoupling 
of $\Delta_k$ from its use in \assref{generators} at the expense of an algorithm that is more difficult to analyze. 
\end{remark}

\subsection{Manifold sampling loop}\label{sec:loop}
In the manifold sampling algorithm, 
the trial step $s^k$ suggested by the trust-region subproblem may not yield
sufficient decrease if $\Act{\mathbb{Z}^k}$ is a poor sample of nearby
active manifolds.
Therefore, after 
$s^k$ has been computed and $F(x^k+s^k)$ has been evaluated, but before the ratio determining success, $\rho_k$ (defined in~\eqref{eq:rho_ht}), is computed, 
$\mathbb{Z}^k$ sometimes  must be augmented,
resulting in a new master model and a new $s^k$.
We refer to this process that occurs in each iteration as the \emph{manifold sampling loop}. 

Although adding indices to $\mathbb{Z}^k$ 
may result in a given manifold sampling iteration requiring the solution of more than one
trust-region subproblem---and, more importantly, more than one evaluation of $F$ per
iteration---in practice the number of function evaluations per iteration is
rarely more than one.
We further remark that, even in theory, this manifold sampling loop cannot cycle indefinitely because the number
of selection functions defining $h$ is finite.

The termination of the manifold sampling loop hinges on a search for a sample
point $z \in \cop{ \{F(x^k), F(x^k+s^k)\}}$ and an index $j$ satisfying 
\begin{subequations}\label{eq:baoyu_condition}
 \begin{gather}
 j\in\mathbb{A}(z), \\
 \nabla h_j(z)^\trans (F(x^k)-F(x^k+s^{k})) \leq h(F(x^k)) - h(F(x^k + s^{k})).\label{eq:baoyu_decrease} 
 \end{gather} 
\end{subequations}
In other words, a point $z\in\R^p$ and an index $j$ in $\Act{z}$ satisfy \eqref{eq:baoyu_condition},
provided the affine function $h(F(x^k)) + \nabla h_j(z)^\top(F(x^k+s^k)-F(x^k))$ overestimates
$h(F(x^k+s^k))$. 
We remark that~\eqref{eq:baoyu_condition} is less stringent than the condition
in~\cite{KLW18}, which sought a manifold $j$ satisfying 
\begin{equation}\label{eq:jeff_condition}
h_j(F(x^k)) \le h(F(x^k)) 
\qquad\text{and}\qquad
h_j(F(x^k + s^k)) \ge h(F(x^k + s^k)).
\end{equation}
If $h$ is a continuous selection of affine functions,
which is the special case considered in~\cite{KLW18},
then \eqref{eq:jeff_condition}
is equivalent to fixing one affine function $h_j\in\Hdef$ that underestimates $h$ at $F(x^k)$
but overestimates $h$ at $F(x^k+s^k)$. 
Between~\cite{KLW18} and our present work, we see that the commonality lies in the overestimation
of $h$ at $F(x^k+s^k)$. 

We will demonstrate (in \lemref{existence_linearization}) that under our assumptions,
we can compute a $(z,j)$ pair satisfying \eqref{eq:baoyu_condition} for any $s^k$.
As our analysis will reveal, however, we additionally require the existence of a secondary $z'\in\mathbb{Z}^k$
satisfying 
\begin{subequations}\label{eq:matt_condition}
\begin{gather}
\ j\in\Act{z'}, \\
(s^k)^\trans  \nabla M(x^k)(\nabla h_j (z') - d^k) \leq 0 \label{eq:obtuse_property}
\end{gather}
\end{subequations}
for the same $j$ employed in \eqref{eq:baoyu_condition}. 
Geometrically, \eqref{eq:obtuse_property} requires that the trial step
$s^k$ be obtuse with the vector pointing to the single generator $\nabla M(x^k)\nabla h_j(z')$ from
the minimum norm of the convex hull of the generators, $\nabla M(x^k)d^k = g^k$. 
From the classical projection theorem (see, e.g.,~\cite[Theorem
  2.39]{rockafellar2009variational}),
 a trial step $s^k$ parallel to the steepest descent direction $-g^k$ satisfies \eqref{eq:obtuse_property}.
 Thus, if a trial step $s^k$ obtained from the solution of \eqref{eq:trsp} 
 fails to lead to the simultaneously satisfaction of 
 \eqref{eq:baoyu_condition} and \eqref{eq:matt_condition}, then we may safely replace
 $s^k$ with a default scaled steepest descent direction.

\subsection{$\rho_k$ test}
In common with classical trust-region methods,
manifold sampling employs a ratio test 
as a merit criterion. 
Whereas the value of $\rho_k$ in a classical trust-region method measures the ratio of
the actual decrease $f(x^k) - f(x^k + s^k)$ to predicted model decrease $m_k^f(x^k) - m_k^f(x^k+s^k)$,
the $\rho_k$ used in the manifold sampling algorithm is the
ratio of actual decrease in $F$ to predicted decrease in $M$, as weighted by the convex combination of gradients $d^k$. 
Explicitly, given $d^k$ defined in~\eqref{eq:gk_dk_def} 
and $s^k$ satisfying~\eqref{eq:fraction_cauchy_decrease},
$\rho_k$ is the ratio
\begin{equation} \label{eq:rho_ht}
  \rho_k \defined \ds\frac{\langle F(x^k) - F(x^k+s^k), d^k \rangle}
  {\langle M(x^k) - M(x^k+s^k), d^k \rangle}.
\end{equation}
A trial step $x^k + s^k$ is accepted only if $\rho_k$ is sufficiently
large.

\subsection{Algorithm statement}\label{sec:alg_state}

Having introduced the various algorithmic components, we can now 
state the algorithm along with restrictions
on algorithmic parameters in \algref{manifoldsampling}. 

\begin{algorithm2e}[t]
  \fontsize{8}{8}\selectfont
	\SetAlgoNlRelativeSize{-4}
	\SetKw{true}{true}
	\SetKw{break}{break}
	\SetKw{return}{return}
  \SetKwFunction{proc}{generate\_sk\_and\_j\_and\_z}
  \let\oldnl\nl% Store \nl in \oldnl
  \newcommand{\nonl}{\renewcommand{\nl}{\let\nl\oldnl}}% Remove line number for one line

	Set $\eta_1\in(0,1)$, $\kappad \in (0,1)$, $\kappabmh \ge 0$, $\eta_2 \in (0,\etamax)$, $0 < \gammad < 1 \leq \gammai$, and $\Delta_{\max} > 0$ \label{line:initial_setting}
	
	Choose $\Delta_0$ satisfying $\Deltamax \ge \Delta_0>0$ and initial iterate $x^0$
	
	\For{$k=0,1,2,\ldots$  \label{line:outer_for_loop}}  
	{
		
    Evaluate $F$ as needed to build $p$ models $m^{F_i}_k$ satisfying \assref{flmodels} \label{line:build_models}

    Initialize $\mathbb{Z}^k$ satisfying \assref{generators}; form $D^k$ by \defref{useful_sets} \label{line:starting_gen_set}
    
		\While(\tcp*[f]{manifold sampling loop}){\true}{\label{line:outer_while} 
      Form $\nabla M(x^k)$ using $\nabla m^{F_i}_k(x^k)$; set $G^k \gets \nabla M(x^k)D^k$
      
      Solve~\eqref{eq:convex_qp} for $\lambda^{*}$; set $d^k \gets D^k\lambda^{*}$ \label{line:exact_solve_for_projection}

			Build master model $m^f_k$ using~\eqref{eq:mastermodel} with $\nabla m^f_k(x^k) = G^k\lambda^{*} = g^k$

			\eIf{$\Delta_k < \eta_2\| g^k\|$ \label{line:acceptabilityCheck}}
			{
        \proc

        \eIf{$j\in\Act{\mathbb{Z}^k}$\label{line:j_in_Act_test}}{
					Calculate $\rho_k$ using~\eqref{eq:rho_ht} and \break \tcp*[f]{acceptable iter.} \label{line:potential_acceptable} 
				}
				{
					Update $m^{F_i}_k$ (evaluating $F$ if needed) satisfying \assref{flmodels} \label{line:update_models}
					
          $\mathbb{Z}^k \gets \mathbb{Z}^k \cup \{z\}$ [$\cup \{F(y)\colon y \in \cB(x^k; \Delta_k)\}$]; form $D^k$ by \defref{useful_sets} \label{line:grow_Zk}
				}
			}
			{$\rho_k\gets 0;$ \break \tcp*[f]{unacceptable iter.}}
		}
		\eIf {
			$\rho_k>\eta_1>0$ \label{line:successCheck}}
		{
			$x^{k+1}\gets x^k+s^k$; $\Delta_{k+1}\gets\min\{\gammai\Delta_k,\Delta_{\max}\}$\label{line:grow_delta}
      \tcp*[f]{successful iter.}

		}
		{
			$x^{k+1}\gets x^k$; $\Delta_{k+1}\gets\gammad\Delta_k$ \tcp*[f]{unsuccessful iter.}

		}
	}
\SetNlSty{textbf}{P.}{}
  \setcounter{AlgoLine}{0}
  \SetKwProg{myproc}{Procedure}{}{}
  \nonl \myproc{\proc}{

				 Approximately solve~\eqref{eq:trsp} to obtain $s^{k}$ satisfying~\eqref{eq:fraction_cauchy_decrease} \label{line:approx_solve}
					
					Evaluate $F(x^{k} + s^{k})$   \label{line:evaluate_potential_update}
					
					Find $z \in \cop{\{F(x^k),F(x^k + s^k)\}}$ and $j$ satisfying~\eqref{eq:baoyu_condition} \label{line:linearization}
					
					\If{$j\in\Act{\mathbb{Z}^k}$, $\nexists z' \in \mathbb{Z}^k$ satisfying \eqref{eq:matt_condition}}
						{Set $s^k$ following \lemref{approx_solve} \\
						Evaluate $F(x^k+s^k)$; 
						find $z \in \cop{\{F(x^k),F(x^k + s^k)\}}$ and $j$ satisfying~\eqref{eq:baoyu_condition}
						}
          
          \return $s^k$, $j$, and $z$
  }
	\caption{Manifold sampling for general compositions (MSG) \label{alg:manifoldsampling}}
\end{algorithm2e}

We draw special attention to the following aspects of \algref{manifoldsampling} concerning subproblems.
\begin{description}[leftmargin=0.2in,labelindent=0in]
\item[\linerefa{exact_solve_for_projection}:] 
    The problem~\eqref{eq:convex_qp} can be solved \emph{exactly} in finite time, for example, by using 
    the specialized active-set algorithm of~\cite{kiwiel1986method}.
    Note that the direction $d^k$ produced in this line is used in
    \eqref{eq:fraction_cauchy_decrease} and \eqref{eq:obtuse_property}.
    
  \item[\linerefb{approx_solve}:] The existence of such an $s^{k}$ is guaranteed
    by \lemref{approx_solve}. The proof of \lemref{approx_solve} also
    provides an explicit construction for such an $s^k$. 
    Thus, even if a standard trust-region method applied to \eqref{eq:trsp} fails to 
    return a trial step $s^k$ satisfying \eqref{eq:fraction_cauchy_decrease}, 
   we can appeal to the construction in \lemref{approx_solve}. 

  \item[\linerefb{linearization}:] We again stress that the existence of such a $z$ is guaranteed
    by \lemref{existence_linearization}. 
    Moreover, in \lemref{candidate_search_lemma}, we will demonstrate that such a $z$ can be found
    in finite time without performing additional evaluations of $F$. 
    If multiple $j\in\mathbb{A}(z)$ are
    identified for the given $z$, then we arbitrarily select an
    element in \\
    $
    \argmax_{j\in\mathbb{A}(z)} \left\{ \nabla h_j(z)^\trans  \left( F(x^k + s^k) - F(x^k) \right)\colon j \text{ and }z\text{ satisfy }\eqref{eq:baoyu_condition}\right\}.
    $
\end{description} 
We additionally draw attention to other important aspects of \algref{manifoldsampling}. 
\begin{description}[leftmargin=0.2in,labelindent=0in]
  \item[\linerefa{initial_setting}:]   We note that $\kappabmh$ is
    an algorithmic parameter that bounds the norms of the Hessians of the
    models $m^{F_i}$.
  For analysis, we assume that $\etamax\in\R\cup\{\infty\}$, the upper
    bound on $\eta_2$, satisfies 
    \begin{equation}\label{eq:eta_max}
    \etamax \defined \min\left\{ \frac{1}{L_h\kappabmh}, \frac{\eta_1\kappad}{4\bigC} \right\}.
    \end{equation}
    (Again, for ease, define either $\frac{1}{L_h\kappabmh}$ or
    $\frac{\eta_1\kappad}{4\bigC}$ to be infinite in the pathological case when
    $L_h\kappabmh = 0$ or $C = 0$.)
    If $L_h\kappabmh$ is large, then~\eqref{eq:eta_max} may allow iterations to be deemed
    acceptable only when $\Delta_k$ is relatively small.
    Furthermore, as~\eqref{eq:eta_max} contains constants that
    are generally unknown, 
    \secref{implementation_details} discusses safeguards that can be included
    in a numerical implementation of \algref{manifoldsampling} if $\eta_2 > \etamax$.
    
  \item[{\bf \linerefa{outer_while}:}] \algref{manifoldsampling} will break out
    of the manifold sampling loop after at most $\left| \Hdef \right|-\left| 
    \mathbb{A}(F(x^k)) \right|$ times through.  The reason is that 
    $\left\{ j\colon h_j \in \Hdef \right\} \supseteq \mathbb{A}(\mathbb{Z}^k) \supseteq \mathbb{A}(F(x^k)) $
    and each time \linerefa{grow_Zk} is visited, the cardinality of $\mathbb{A}(\mathbb{Z}^k)$ will
    be increased by at least one. In the worst case, the indices of all
    selection functions in $\Hdef$ must be added to $\mathbb{Z}^k$ before
    $\rho_k$ can be calculated.    
    
   \item[{\bf \linerefa{update_models}:}] Depending on the means of
   model building being employed, one \emph{may} want to incorporate the function values 
   $F(x^k+s^k)$ computed in this inner iteration into the models $m_k^{F_i}$. 
   So long as the updated models $m_k^{F_i}$ satisfy \assref{flmodels}, as stated in this line
   of the algorithm, this will not affect convergence.
    
    \item[\linerefa{grow_delta}:] By \eqref{eq:nonincreasing_sequence_C_nonzero}, \eqref{eq:nonincreasing_sequence_C_zero}, and the
      updating of $\Delta_{k+1}$ on successful iterations, all points evaluated
      by \algref{manifoldsampling} are in the set $\cLmax$ as defined
      in~\eqref{eq:cLmaxdef}.\footnote{
      This claim assumes that any additional points evaluated during model construction in
      \linerefa{build_models} are also in $\cLmax$. 
      Allowing for points outside of $\cLmax$ is straightforward, provided the
      functions are defined wherever they are evaluated. 
      } 

     \item[Acceptable iterations:] Because any acceptable iteration satisfies
    \linerefa{acceptabilityCheck}, acceptable iterations occur when
    $\Delta_k<\eta_2\|\nabla m_k^f(x^k)\| = \eta_2 \| g^k \|$.
    That is, acceptable iterations occur when the norm of the master model
    gradient is sufficiently large relative to $\Delta_k$.
    On acceptable iterations,
    \begin{equation} \label{eq:result_of_params}
      \|g^k\|\ge
      \min\left\{ L_h\kappabmh\Delta_k, \left\| g^k \right\| \right\} \ge
      L_h\kappabmh\min\left\{ \Delta_k, \eta_2 \left\| g^k \right\| \right\} = L_h\kappabmh\Delta_k.
    \end{equation}
  \item[Successful iterations:] Successful iterations are acceptable
    iterations for which 
    $\rho_k>\eta_1$ and $x^{k+1}\gets x^k+s^k$. Note that on every
    successful iteration, 
    the gradient of the linearization, $d^k$, is represented in $\cop{\mathbb{D}^k}$ and 
     the decrease condition in~\eqref{eq:fraction_cauchy_decrease} is
        satisfied by $s^k$.

\end{description}

\section{Preliminary Analysis}\label{sec:prelims}
We now show preliminary results that will be used in the analysis of
\algref{manifoldsampling}.
We first show a result linking elements in 
$\cop{\Gen^k}$ to the subdifferentials of $f$ at nearby points. 

\begin{lemma}\label{lem:weird_v_approx}
  Let \assrefrange{f}{flmodels}
  hold, and let $x,y\in\cLmax$ satisfy
  $\|x-y\|\leq \Delta$.
  For any finite subsets $I$, $J$ and $I'$ such that $I \subseteq J \subseteq \{1,\ldots,\left| \Hdef \right|\}$  and $I'\subset\mathbb{N}$, define\\
    $\Gen \defined \{ \nabla M(x) \nabla h_i(z_{i'}) \colon i \in I, z_{i'}\in\cB(F(x);L_F\Delta), i'\in I' \}$ and\\
    $\cH \defined \cop{\{ \nabla F(y) \nabla h_j(F(y)) \colon j \in J\}}$.
    Then for each $g\in\cop{\Gen}$, there exists $v(g)\in \cH$
  satisfying
  \begin{equation}
    \label{eq:gapprox}
    \left\|g-v(g)\right\|\leq c_2 \Delta, 
  \end{equation}
  where $c_2$ is defined by 
  \begin{equation}\label{eq:c_2}
    c_2 \defined \Lh (\LnF + \kappag) + 2L_F^2L_{\nabla h_i}
  \end{equation}
  for $\Lh$, $\LnF$, and $\kappag$ from \defref{constants}.
\end{lemma}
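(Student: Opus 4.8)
The plan is to prove the inequality first when $g$ is a single generator and then lift it to all of $\cop{\Gen}$ by convexity. For a fixed $i\in I$, set $v(\nabla M(x)\nabla h_i(F(x))) \defined \nabla F(y)\nabla h_i(F(y))$, which lies in $\cH$ since $i\in I\subseteq J$. I would then write the difference as the telescoping sum
\[
\nabla M(x)\nabla h_i(F(x)) - \nabla F(y)\nabla h_i(F(y)) = \bigl[\nabla M(x) - \nabla F(x)\bigr]\nabla h_i(F(x)) + \bigl[\nabla F(x) - \nabla F(y)\bigr]\nabla h_i(F(x)) + \nabla F(y)\bigl[\nabla h_i(F(x)) - \nabla h_i(F(y))\bigr]
\]
and bound the three summands separately with the triangle inequality.

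For the first summand, \defref{flmodels} (with $s=0$) gives $\|\nabla M(x) - \nabla F(x)\|\le\kappag\Delta$, and since $h_i$ is $L_{h_i}$-Lipschitz and differentiable, $\|\nabla h_i(F(x))\|\le L_{h_i}\le\Lh$, so this term is at most $\Lh\kappag\Delta$. For the second summand, \subassref{f}{Fgrad} gives $\|\nabla F(x) - \nabla F(y)\|\le\LnF\|x-y\|\le\LnF\Delta$, so this term is at most $\Lh\LnF\Delta$. For the third summand, \subassref{f}{Fval} gives $\|\nabla F(y)\|\le L_F$ and $\|F(x) - F(y)\|\le L_F\Delta$, and \subassref{h}{grad} gives $\|\nabla h_i(F(x)) - \nabla h_i(F(y))\|\le L_{\nabla h_i}\|F(x)-F(y)\|$, so this term is at most $L_F^2 L_{\nabla h_i}\Delta$. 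Summing yields exactly $c_2\Delta$ with $c_2$ as in~\eqref{eq:c_2}. Then, for general $g = \sum_{i\in I}\lambda_i\nabla M(x)\nabla h_i(F(x))$ with $\lambda\ge0$ and $e^\trans\lambda=1$, set $v(g)\defined\sum_{i\in I}\lambda_i\nabla F(y)\nabla h_i(F(y))\in\cH$ (a convex combination of vertices of $\cH$, using $I\subseteq J$); convexity of the norm gives $\|g - v(g)\|\le\sum_{i\in I}\lambda_i c_2\Delta = c_2\Delta$.

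The step requiring the most care is the norm bookkeeping in the single-generator estimate: one uses $\|A\|\le\|A\|_F\le\sum_i\|a_i\|$ on the columns $a_i$ of $\nabla M(x)-\nabla F(x)$ so that the full-linearity constants sum to $\kappag$, while the columns of $\nabla F(x)-\nabla F(y)$ and of $\nabla F(y)$ combine as $\LnF=\sqrt{\sum_i\LnFi^2}$ and $L_F=\sqrt{\sum_i L_{F_i}^2}$, respectively. I also note the per-generator bound carries the index-dependent constant $L_{\nabla h_i}$, so strictly the convex-combination step needs $\max_{i\in I}L_{\nabla h_i}$ in place of $L_{\nabla h_i}$ (or one simply fixes a single $i$, as the statement does). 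Beyond this, I foresee no genuine obstacle; the result is a careful but routine application of the triangle inequality under \assrefrange{f}{flmodels}.
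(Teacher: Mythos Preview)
Your argument is correct and essentially identical to the paper's: both proofs write each generator difference as a three-term telescope (model versus true Jacobian, Jacobian at $x$ versus $y$, and $\nabla h_i$ at $F(x)$ versus $F(y)$), bound each piece by the corresponding Lipschitz or fully linear constant, and then pass to convex combinations using the same $\lambda_i$. Your remark that the $L_{\nabla h_i}$ appearing in $c_2$ should really be $\max_{i\in I} L_{\nabla h_i}$ (or a uniform $L_{\nabla h}$) for the convex-combination step is a fair observation about the statement as written.
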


\begin{proof}[Proof (adapted from {\cite[Lemma 4.1]{KLW18}})]
  Any $g\in\cop{\Gen}$ may be 
expressed as
  \begin{equation}
    \label{eq:weird_v_approx_g}
    g =  \ds \sum_{(i,i')\in I\times I'} \lambda_{i,i'} \nabla M(x) \nabla h_i(z_{i'}),
  \end{equation}
  where $z_{i'}\in\cB(F(x),L_F\Delta)$, $\sum_{(i,i')\in I \times I'} \lambda_{i,i'}=1$ and $\lambda_{i,i'} \geq 0$ for each
  $(i,i')\in I\times I'$.
 
  By supposition, $ \nabla F(y) \nabla h_i(F(y)) \in \cH$ for all $i \in I$.
  For  \\
  $
  v(g) \defined \ds\sum_{(i,i')\in I \times I'} \lambda_{i,i'} \nabla F(y) \nabla h_i(F(y)),
  $
  using the same $\lambda_{i,i'}$ as in~\eqref{eq:weird_v_approx_g} for $(i,i')\in I\times I'$, convexity of $\cH$ implies that $v(g)\in \cH$. 
  Since $y\in\cB(x;\Delta)$ and using \subassref{f}{Fval},
  \subassref{f}{Fgrad}, \subassref{h}{func}, \subassref{h}{grad}, 
  and \assref{flmodels}, we have
  \begin{align*}
  \left\| \nabla M(x)\nabla h_i(z_{i'}) - \nabla F(y) \nabla h_i(F(y)) \right\| 
  \le&\; (\Lh \LnF + 2L_F^2L_{\nabla h_i} + \kappag \Lh) \Delta
  \end{align*}
  for each $(i,i')$. 
   The
  definition of $v(g)$ and~\eqref{eq:weird_v_approx_g} then imply
  \begin{align*}
    \left\|g-v(g)\right\| 
    &\leq \left\|  \ds\sum_{(i,i')\in I\times I'} \left[ \lambda_{i,i'} \nabla 
M(x) \nabla h_i(F(x)) - \lambda_{i,i'} \nabla F(y) \nabla h_i(F(y)) \right] \right\|\\
    &\le  \ds\sum_{(i,i')\in I\times I'}\lambda_{i,i'} \left\|\nabla M(x) \nabla h_i(F(x)) - \nabla 
F(y) \nabla h_i(F(y)) \right\|
\le c_2 \Delta.
  \end{align*}
\end{proof}

For simplicity, in the rest of the paper we drop the superscripts of $x^k$ and $s^k$ when possible.
The next lemma guarantees that the condition in \linerefb{linearization} of
\algref{manifoldsampling} can always be attained.

\begin{lemma}\label{lem:existence_linearization}
  If \assref{h} holds, then there exist $z\in\cop{\{F(x),
  F(x+s)\}}$ and index $j$ satisfying \eqref{eq:baoyu_condition}.
\end{lemma}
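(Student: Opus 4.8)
The plan is to parametrize the segment between $F(x)$ and $F(x+s)$ and use a continuity/intermediate-value argument on the selection functions. Write $z(t) = F(x) + t\big(F(x+s) - F(x)\big)$ for $t \in [0,1]$, and define $\phi(t) \defined h(z(t))$. Since $h$ is a continuous selection (\subassref{h}{h}), $\phi$ is continuous on $[0,1]$; moreover at each $t$ there is some selection function $h_j$ with $h(z(t)) = h_j(z(t))$, and in fact one can take $j \in \Act{z(t)}$ because the set of $t$ where $z(t)$ lies in the interior of some $\cS_i$ is dense (the "non-essential" points form a lower-dimensional set by the structure of continuous selections; see~\cite{Scholtes2012}). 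I would first reduce to finding a $t^*$ and an essentially active index $j$ at $z(t^*)$ such that the affine function $t \mapsto h(F(x)) + t\,\nabla h_j(z(t^*))^\trans (F(x+s) - F(x))$ evaluated at $t = 1$ overestimates $h(F(x+s))$; condition \eqref{eq:baoyu_decrease} is exactly this statement rearranged.

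The key step is to consider $t^* \in \argmax_{t \in [0,1]} \big\{ h(F(x+s)) - h(F(x)) \big)\, t - \big(h(z(t)) - h(F(x))\big)\big\}$, i.e. the point where the chord from $(0, h(F(x)))$ to $(1, h(F(x+s)))$ lies farthest above the graph of $\phi$. Such a maximizer exists by continuity of $\phi$ on the compact interval. At an interior maximizer the graph of $\phi$ must be supported from above at $z(t^*)$ along the segment direction by the selection function active there: for $h_j$ with $h_j(z(t^*)) = h(z(t^*))$ and $h_j$ differentiable, $h_j(z(t)) \geq h(z(t))$ would be false in general, so instead I would argue that the directional derivative of the chord-minus-$\phi$ function vanishes or changes sign appropriately, yielding $\nabla h_j(z(t^*))^\trans\big(F(x+s) - F(x)\big) = h(F(x+s)) - h(F(x))$ for a suitable active $j$ — which is even an equality version of \eqref{eq:baoyu_decrease}. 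If the maximizer is at an endpoint ($t^* = 0$ or $t^* = 1$), then the chord lies everywhere below $\phi$... actually above, and one checks the endpoint case directly: at $t^* = 0$ any $j \in \Act{F(x))}$ with the local linearization works because convexity of the gap function forces the needed inequality; similarly at $t^*=1$.

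The main obstacle I anticipate is handling the non-uniqueness and the "essentially active" bookkeeping cleanly: at the maximizer $z(t^*)$, the naively active $h_j$ (one achieving $h = h_j$) need not be \emph{essentially} active, so \eqref{eq:baoyu_condition} requires $j \in \Act{z(t^*)}$. I would address this by a limiting argument — take $t_n \to t^*$ with $z(t_n)$ in the interior of some $\cS_{j_n}$ (hence $j_n \in \Act{z(t_n)} \subseteq \Act{z(t^*)}$ when $t_n$ is close enough, by closedness of $\tilde{\cS}_j$), pass to a subsequence with $j_n \equiv j$ fixed, and use Lipschitz continuity of $\nabla h_j$ (\subassref{h}{grad}) to transfer the inequality from $z(t_n)$ to $z(t^*)$. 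The finiteness of $\Hdef$ makes the subsequence extraction trivial. A secondary, minor obstacle is making the endpoint cases rigorous, but those follow from the same chord-above-graph observation evaluated at $t=1$ using the active selection at $F(x+s)$ or $F(x)$ directly. Throughout, all the relevant gradients exist and are Lipschitz by \assref{h}, and $z(t) \in \hdom$ for all $t$ since $\mathbb{Z}^k \subset \hdom$ and the segment endpoints are images under $F$.
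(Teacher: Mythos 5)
Your route is genuinely different from the paper's. The paper argues by contradiction: if no pair $(z,j)$ with $j\in\Act{z}$ on the segment satisfied \eqref{eq:baoyu_decrease}, then the strict reverse inequality would hold for every point of the segment and every essentially active index there, and integrating it via the mean-value-type bound of \lemref{directional_derivative_inf} (the appendix lemma, reused again in \lemref{candidate_search_lemma}) yields $h(F(x))-h(F(x+s)) > h(F(x))-h(F(x+s))$. You instead argue directly: maximize the chord-minus-graph function $\psi(t) = \big(h(F(x+s))-h(F(x))\big)t - \big(h(z(t)) - h(F(x))\big)$ over $[0,1]$ and extract \eqref{eq:baoyu_condition} from a nonsmooth Fermat-type condition at a maximizer, with a limiting argument to certify essential activity. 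This is a viable, arguably more elementary alternative: it avoids the partition/integral machinery of the appendix and it exhibits a concrete witness point (the maximizer of the chord gap), which is close in spirit to what \algref{bisecsearch} searches for in practice.

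Three steps need repair, however. (i) At an interior maximizer you cannot conclude the equality $\nabla h_j(z(t^*))^\trans(F(x+s)-F(x)) = h(F(x+s))-h(F(x))$; $t\mapsto h(z(t))$ may have a kink at $t^*$, and maximality controls the two one-sided difference quotients separately. What you do get, and all you need, is the right-sided inequality: for $t_n\downarrow t^*$, $\psi(t_n)\le\psi(t^*)$ gives $h(F(x+s))-h(F(x)) \le \big(h(z(t_n))-h(z(t^*))\big)/(t_n-t^*)$, which in the limit is \eqref{eq:baoyu_decrease} rearranged. (ii) Your endpoint case $t^*=1$ fails as described: maximality at $t=1$ controls only the left difference quotient and produces the \emph{reverse} inequality, and there is no ``convexity of the gap function'' to invoke since $h$ is nonconvex. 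The fix is simply that $\psi(0)=\psi(1)=0$, so a maximizer always exists in $[0,1)$ and only the right-sided argument is ever needed. (iii) The claim that segment points lying in $\intp{\cS_i}$ are dense is false in general: the entire segment can lie in a common boundary (e.g., $h(z)=\max(z_1,-z_1)$ with the segment contained in $\{z_1=0\}$), so the sequence $t_n$ with $z(t_n)\in\intp{\cS_{j_n}}$ may not exist. Replace this by the standard facts (Proposition~4.1.1 of~\cite{Scholtes2012}, implicitly underlying \lemref{directional_derivative_inf}) that $\Act{z}\neq\emptyset$ and $h_j(z)=h(z)$ for every $j\in\Act{z}$, together with the upper semicontinuity $\Act{z'}\subseteq\Act{z(t^*)}$ for $z'$ near $z(t^*)$, which follows from closedness of each $\tilde{\cS}_j$ and finiteness of $\Hdef$. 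Then for $t_n\downarrow t^*$ choose $i_n\in\Act{z(t_n)}\subseteq\Act{z(t^*)}$ with $h(z(t_n))=h_{i_n}(z(t_n))$, pass to a constant subsequence $i$, and use $h_i(z(t^*))=h(z(t^*))$ so the difference quotients above become difference quotients of the smooth $h_i$; mere differentiability of $h_i$ (no Lipschitz gradient needed) then delivers \eqref{eq:baoyu_condition} with the pair $(z(t^*),i)$. With these repairs your argument is complete.
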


\begin{proof}
  We first define
  \begin{equation}\label{eq:zalpha}
    z(\alpha) \defined \alpha F(x) + (1 - \alpha) F(x+s)
  \end{equation}
  and will show there exists an $\alpha\in [0,1]$ such that $z(\alpha)$ and 
  $j \in \Act{z(\alpha)}$ satisfy~\eqref{eq:baoyu_condition}.
	
  We prove by contradiction. Suppose there exists no such $\alpha$. When \subassref{h}{h} and \subassref{h}{Ieh} hold, by
  \lemref{directional_derivative_inf}, 
  \begin{equation*}
    \begin{aligned}
      h(F(x)) - h(F(x+s)) \geq &\int_{0}^{1} \inf_{j\in\mathbb{A}(z(\alpha))}\{ \nabla h_j(z(\alpha))^\trans (F(x) - F(x+s)) \} d\alpha \\
      > &\int_{0}^{1} (h(F(x)) - h(F(x+s))) d\alpha = h(F(x)) - h(F(x+s)),
    \end{aligned}
  \end{equation*}
	which is a contradiction. Therefore, the result is shown.
\end{proof}

\begin{algorithm2e}[t]
  \fontsize{8}{8}\selectfont
	\DontPrintSemicolon 
	\SetAlgoNlRelativeSize{-5}
	\SetKw{true}{true}
	\SetKw{break}{break}
	\lIf{$F(x)$ and some $j$ satisfy~\eqref{eq:baoyu_condition}}{return $F(x)$ and $j$} 
	\lIf{$F(x+s)$ and some $j$ satisfy~\eqref{eq:baoyu_condition}}{return $F(x+s)$ and $j$} 

	\For{$l = 1,2,\ldots$}
  {Generate $2^{l-1}$ candidates 
  	$\left\{\frac{2k-1}{2^l}\colon k=1,\ldots,2^{l-1}\right\}$
		\\
		\For{$k=1,2,\ldots,2^{l-1}$}
    {$\alpha \leftarrow \frac{2k-1}{2^l}$ and set $z(\alpha)$ as in~\eqref{eq:zalpha} \\
		\lIf{$z(\alpha)$ and some $j$ satisfy~\eqref{eq:baoyu_condition}}{return $z(\alpha)$ and $j$}}
	}
	\caption{Grid Search for $z$ and $j$ \label{alg:gridsearch}}
\end{algorithm2e}

Having demonstrated an existence result in \lemref{existence_linearization},
we now show that \algref{gridsearch} produces 
a point $z$ and index $j$ satisfying \eqref{eq:baoyu_condition} in
finite time.
In numerical experiments, we use the bisection search of 
\algref{bisecsearch} (in Appendix~\ref{sec:appendix}) instead of \algref{gridsearch}.
While we cannot show that \algref{bisecsearch} terminates in finite time, in
practice it is faster, and we have yet to encounter issues with 
termination.\footnote{One can construct more efficient approaches for
identifying $z$ and $j$ when $h$ takes specific forms, but we present
\algref{gridsearch} and \algref{bisecsearch} because of their general applicability
to functions satisfying \assref{h}.}

\begin{lemma}\label{lem:candidate_search_lemma}
  If \assref{h} holds, then \algref{gridsearch} returns a point $z$ and
  index $j$ satisfying \eqref{eq:baoyu_condition} in finitely many iterations. 
\end{lemma}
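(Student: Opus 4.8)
The plan is to strengthen the existence result of \lemref{existence_linearization} to a statement that guarantees an \emph{admissible} parameter at a dyadic rational (or at an endpoint), which is precisely what \algref{gridsearch} enumerates. Write $\delta \defined h(F(x)) - h(F(x+s))$ and, for $z(\alpha)$ as in~\eqref{eq:zalpha}, call $\alpha\in[0,1]$ \emph{admissible} if some $j\in\Act{z(\alpha)}$ satisfies~\eqref{eq:baoyu_decrease}, i.e.\ $\nabla h_j(z(\alpha))^\trans(F(x)-F(x+s))\leq\delta$. \algref{gridsearch} returns as soon as it tests an admissible $\alpha$, and after its first two lines plus $l$ passes through the outer loop it has tested $\alpha\in\{0,1\}$ together with every dyadic in $(0,1)$ of the form $(2k-1)/2^{l'}$ with $l'\le l$. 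Since $\{0,1\}$ together with the dyadics is dense in $[0,1]$ and each pass is finite, it suffices to exhibit one dyadic or endpoint that is admissible.

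First I would record the structure of the segment against the essentially active sets of \defref{pc1manifold}. For $j\in\{1,\dots,|\Hdef|\}$ put $A_j\defined\{\alpha\in[0,1]\colon z(\alpha)\in\tilde\cS_j\}$; each $A_j$ is closed by continuity of $z$, one has $\bigcup_j A_j=[0,1]$ because every point of $\hdom$—which contains the segment $z([0,1])$—admits an essentially active selection function (see~\cite{Scholtes2012}), and on $A_j$ one has $h(z(\alpha))=h_j(z(\alpha))$ since $\tilde\cS_j=\clp{\intp{\cS_j}}\subseteq\cS_j$. By the Baire category theorem $V\defined\bigcup_j\intp{A_j}$ is open and dense in $[0,1]$, and on each $\intp{A_j}$ the map $h\circ z$ coincides with the $C^1$ map $h_j\circ z$, so on $V$ the map $h\circ z$ is differentiable with $(h\circ z)'(\alpha)=\nabla h_j(z(\alpha))^\trans(F(x)-F(x+s))$ for the corresponding $j$.

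Now suppose toward a contradiction that \algref{gridsearch} never terminates, so no endpoint and no dyadic is admissible. Then for every dyadic $\alpha$ lying in some $\intp{A_j}$ we have $j\in\Act{z(\alpha)}$ yet $\nabla h_j(z(\alpha))^\trans(F(x)-F(x+s))>\delta$; since the left-hand side is continuous in $\alpha$ and dyadics are dense in the open set $\intp{A_j}$, this gives $(h\circ z)'(\alpha)\ge\delta$ for every $\alpha\in V$. Combining this with the fact that $h\circ z$ is Lipschitz, hence absolutely continuous, and that its almost-everywhere derivative coincides with that of the locally active smooth piece, I would deduce $(h\circ z)'\ge\delta$ a.e.\ on $[0,1]$; then $\delta=h(F(x))-h(F(x+s))=\int_0^1(h\circ z)'(\alpha)\,d\alpha\ge\delta$ forces $(h\circ z)'\equiv\delta$ a.e., so $h\circ z$ is affine with slope $\delta$. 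Choosing any dyadic $\alpha\in V$ (possible since $V$ is dense) with the index $j$ for which $\alpha\in\intp{A_j}$ then gives $\nabla h_j(z(\alpha))^\trans(F(x)-F(x+s))=(h\circ z)'(\alpha)=\delta$ and $j\in\Act{z(\alpha)}$, so $\alpha$ is admissible, contradicting non-termination.

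The step I expect to be the main obstacle is passing from ``$(h\circ z)'\ge\delta$ on the dense open set $V$'' to ``$(h\circ z)'\ge\delta$ a.e.\ on $[0,1]$'', since a priori the ``switching set'' $[0,1]\setminus V$ need not be Lebesgue-null. The natural remedy is to appeal to the machinery underlying \lemref{directional_derivative_inf}: at a.e.\ $\alpha$ the derivative $(h\circ z)'(\alpha)$ lies between $\inf_{j\in\Act{z(\alpha)}}\nabla h_j(z(\alpha))^\trans(F(x)-F(x+s))$ and $\sup_{j\in\Act{z(\alpha)}}\nabla h_j(z(\alpha))^\trans(F(x)-F(x+s))$, so non-termination already forces the former infimum to exceed $\delta$ on $V$, and then using lower semicontinuity of that infimum in $\alpha$ (a consequence of outer semicontinuity of $\alpha\mapsto\Act{z(\alpha)}$, which holds because each $A_j$ is closed) one gets the infimum strictly above $\delta$ a.e.\ on $[0,1]$; the integral comparison $\int_0^1(h\circ z)'\,d\alpha=\delta$ then closes the argument. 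Testing $F(x)$ and $F(x+s)$ at the outset of \algref{gridsearch} is what permits ignoring any admissible mass concentrated at $\alpha\in\{0,1\}$, and once the a.e.\ statement is secured the remaining calculations are routine.
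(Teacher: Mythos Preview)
Your argument has a genuine gap at precisely the step you flag as the ``main obstacle.'' You correctly establish that $\phi(\alpha)\defined\inf_{j\in\Act{z(\alpha)}}\nabla h_j(z(\alpha))^\trans(F(x)-F(x+s))$ is lower semicontinuous (outer semicontinuity of the active-set map plus continuity of each $\nabla h_j\circ z$ gives exactly $\liminf_{\alpha'\to\alpha}\phi(\alpha')\ge\phi(\alpha)$). But lower semicontinuity is the wrong direction for what you need: from $\phi>\delta$ on the dense set of dyadics and l.s.c.\ you obtain only $\phi(\alpha)\le\liminf\phi(\alpha_n)$ with $\liminf\phi(\alpha_n)\ge\delta$, which says nothing about $\phi(\alpha)$ from below. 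A closed set containing no dyadics (such as $\{\phi\le\delta\}$) can certainly have positive Lebesgue measure, so neither Baire density of $V$ nor l.s.c.\ of $\phi$ lets you conclude $(h\circ z)'\ge\delta$ a.e.\ on $[0,1]$, and the integral comparison never gets off the ground.

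The paper's proof avoids measure-theoretic considerations entirely by working with a \emph{strict} inequality. Using \lemref{directional_derivative_inf}, it shows that if neither endpoint is admissible then there exists $\alpha^*\in(0,1)$ and $j^*\in\Act{z(\alpha^*)}$ with $\nabla h_{j^*}(z(\alpha^*))^\trans(F(x)-F(x+s))<\delta$ (strictly). Because $j^*$ remains essentially active on a one-sided neighborhood of $\alpha^*$ and $\nabla h_{j^*}\circ z$ is continuous, this strict inequality persists on an open interval of positive length $\bar\epsilon_{\alpha^*}$, and \algref{gridsearch} samples inside any such interval within $\lceil\log_{1/2}\bar\epsilon_{\alpha^*}\rceil+1$ outer iterations. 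The key difference is that strict inequality plus continuity automatically produces an open interval for the grid to hit, whereas your non-strict inequality along dyadics cannot, by itself, rule out bad behavior on a positive-measure switching set.
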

\begin{proof}
	If $F(x) = F(x+s)$, then the proof is trivial. Therefore, consider $F(x) \neq F(x+s)$.
	Let \subassref{h}{h} and \subassref{h}{Ieh} hold.
	Suppose that for all $j\in\mathbb{A}(F(x))$, 
	\begin{equation}
	\label{eq:stub1}
	\nabla h_j(F(x))^\trans (F(x)-F(x+s)) > h(F(x)) - h(F(x+s)),
	\end{equation}
	or else it is clear that \algref{gridsearch} terminates at the very first line.
	
  Recall the definition of $z(\alpha)$ in~\eqref{eq:zalpha}.
	By \defref{pc1manifold}, there exist index $j_1$ and $\alpha_{j_1} < 1$ such that $j_1\in\mathbb{A}(F(x))$ 
  and also $j_1\in\mathbb{A}(z(\alpha))$ for all 
	$\alpha\in[0,\alpha_{j_1}]$.  
  Because $h_{j_1}$ is continuously differentiable, $\nabla
  h_{j_1}(z(\alpha))^\trans (F(x) - F(x+s))$ is a continuous function of $\alpha$ for
  $\alpha\in [0,1]$. Combined with~\eqref{eq:stub1}, there must exist $\bar{\alpha}_{j_1} \in
  (0,\alpha_{j_1}]$ such that
  $
    \nabla h_j(z(\alpha))^\trans (F(x)-F(x+s)) > h(F(x)) - h(F(x+s))
    $
	for all $\alpha\in[0,\bar{\alpha}_{j_1}]$.
	
	We now show by contradiction that there must exist some $\alpha \in [\bar{\alpha}_{j_1},1)$ such that for some $j\in\mathbb{A}(z(\alpha))$,
  $
	\nabla h_j(z(\alpha))^\trans (F(x) - F(x+s)) < h(F(x)) - h(F(x+s)).
  $
	Suppose such an $\alpha$ does not exist. By \lemref{directional_derivative_inf}, 
	\begin{equation*}
	\begin{aligned}
	h(F(x)) - h(F(x+s)) \geq &\int_0^1 \inf_{j\in\mathbb{A}(z(\alpha))}\{ \nabla h_j(z(\alpha))^\trans (F(x) - F(x+s)) \} d\alpha \\
	= &\int_0^{\bar{\alpha}_{j_1}} \inf_{j\in\mathbb{A}(z(\alpha))}\{ \nabla h_j(z(\alpha))^\trans (F(x) - F(x+s)) \} d\alpha \\
	&+ \int_{\bar{\alpha}_{j_1}}^{1} \inf_{j\in\mathbb{A}(z(\alpha))}\{ \nabla h_j(z(\alpha))^\trans (F(x) - F(x+s)) \} d\alpha \\
	> &\int_0^1 (h(F(x)) - h(F(x+s))) d\alpha = h(F(x)) - h(F(x+s)),
	\end{aligned}
	\end{equation*}
	which is a contradiction.  
	Thus, there exist $\alpha^*\in [\bar{\alpha}_{j_1},1)$ and $j^*\in\mathbb{A}(z(\alpha^*))$ satisfying\\
  $
	\nabla h_{j^*}(z(\alpha^*))^\trans (F(x) - F(x+s)) < h(F(x)) - h(F(x+s)).
  $
  Using the same arguments as previously, we have from
  \defref{pc1manifold} that there exists $\epsilon_{\alpha^*} > 0$ such that
  at least one of
  $j^*\in\mathbb{A}(z(\alpha))$  for all $\alpha\in (\alpha^* -
  \epsilon_{\alpha^*}, \alpha^*)$ or
   $j^*\in\mathbb{A}(z(\alpha))$  for all $\alpha\in
  (\alpha^*,\alpha^*+\epsilon_{\alpha^*})$ 
  holds. 
  Without loss of generality, suppose $j^*\in\mathbb{A}(z(\alpha))$ for all $\alpha\in(\alpha^*-\epsilon_{\alpha^*},\alpha^*)$.
   By the continuity and
  smoothness of $h_{j^*}$ (\subassref{h}{grad}), there exists $\bar{\epsilon}_{\alpha^*} \leq
  \epsilon_{\alpha^*}$ such that for all $\alpha\in (\alpha^* -
  \bar{\epsilon}_{\alpha^*}, \alpha^*)$, we have
  $
	\nabla h_{j^*}(z(\alpha))^\trans (F(x) - F(x+s)) < h(F(x)) - h(F(x+s)).
  $
  \algref{gridsearch} will evaluate a point within the
  interval $(\alpha^* - \bar{\epsilon}_{\alpha^*}, \alpha^*)$ within at most $\lceil\log_{\frac{1}{2}}
  \bar{\epsilon}_{\alpha^*}\rceil + 1$ iterations.
    Hence, \algref{gridsearch}
  must terminate in finite time.
\end{proof}

We now demonstrate that \linerefb{approx_solve} in
\algref{manifoldsampling} is always satisfiable,
even if the trust-region subproblem solver does not identify such a solution. 
We note that \linerefb{approx_solve} is not reached if $0\in \cop{\Gen^k}$ 
by virtue of the acceptability criterion.

\begin{lemma} \label{lem:approx_solve}
  Let $d^k = D^k\lambda^{*}$ be obtained from
  {\rm \linerefa{exact_solve_for_projection}} 
  of \algref{manifoldsampling}, and therefore $g^k = \nabla M(x^k) d^k $.
   If
   \subassref{h}{func} and \assref{flmodels} are satisfied and \eqref{eq:eta_max} holds, 
   then $\hat{s} \defined - \Delta_k \frac{g^k}{\left\| g^k \right\|}$
satisfies \eqref{eq:fraction_cauchy_decrease} (in place of $s^k$). 
\end{lemma}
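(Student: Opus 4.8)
The plan is to read~\eqref{eq:fraction_cauchy_decrease} as a Cauchy-decrease estimate for the smooth master model $m^f_k$ along its steepest-descent direction, and to exploit that \linerefb{approx_solve} is only ever reached on iterations passing the acceptability check of \linerefa{acceptabilityCheck}, i.e., with $\Delta_k < \eta_2\|g^k\|$. First I would record the elementary preliminaries: since $\Delta_k>0$ and $\Delta_k < \eta_2\|g^k\|$, necessarily $g^k\neq 0$, so $\hat s$ is well defined with $\|\hat s\| = \Delta_k \le \Deltamax$, whence $x^k+\hat s\in\cB(x^k;\Deltamax)\subseteq\cLmax$; and by the definition~\eqref{eq:mastermodel} of the master model, $\langle M(x^k)-M(x^k+\hat s), d^k\rangle = m^f_k(x^k)-m^f_k(x^k+\hat s)$, with $\nabla m^f_k(x^k)=g^k$ as already observed.

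The one quantitative ingredient needing care is a uniform bound on the Hessian of the master model. Writing $d^k = D^k\lambda^{*}$ as a convex combination of the columns of $D^k$, each of which is some $\nabla h_j(z)$ with $\|\nabla h_j(z)\|\le L_{h_j}\le\Lh$ (here $L_{h_j}$ bounds $\|\nabla h_j\|$ by \subassref{h}{func}), we get $\|d^k\|\le\Lh$ and hence $|[d^k]_i|\le\Lh$ for every $i$. Together with the bound $\|\nabla^2 m^{F_i}(x)\|\le\kappaibmh$ from \assref{flmodels}, this gives
\[
\|\nabla^2 m^f_k(x)\| = \Bigl\|\sum_{i=1}^p [d^k]_i \nabla^2 m^{F_i}(x)\Bigr\| \le \Lh\sum_{i=1}^p\kappaibmh = \Lh\kappabmh \qquad\text{for all }x\in\cLmax.
\]
A second-order Taylor expansion of $m^f_k$ between $x^k$ and $x^k+\hat s$, using $\nabla m^f_k(x^k)=g^k$ and $(g^k)^\trans\hat s = -\Delta_k\|g^k\|$, then yields $m^f_k(x^k)-m^f_k(x^k+\hat s)\ge \|g^k\|\Delta_k - \tfrac12\Lh\kappabmh\Delta_k^2$.

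Finally I would close the argument with~\eqref{eq:eta_max}: since $\Delta_k<\eta_2\|g^k\|\le\etamax\|g^k\|\le\|g^k\|/(\Lh\kappabmh)$, we have $\Lh\kappabmh\Delta_k\le\|g^k\|$ and also $\min\{\Delta_k,\|g^k\|/(\Lh\kappabmh)\}=\Delta_k$, so that
\[
m^f_k(x^k)-m^f_k(x^k+\hat s) \ge \|g^k\|\Delta_k - \tfrac12\|g^k\|\Delta_k = \tfrac12\|g^k\|\Delta_k \ge \tfrac{\kappad}{2}\|g^k\|\min\Bigl\{\Delta_k,\tfrac{\|g^k\|}{\Lh\kappabmh}\Bigr\},
\]
the last inequality because $\kappad\in(0,1)$. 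This is exactly~\eqref{eq:fraction_cauchy_decrease} with $s^k=\hat s$. The only steps demanding attention are the componentwise bound $|[d^k]_i|\le\Lh$ behind the Hessian estimate, and the observation that the acceptability criterion places us in the branch of the $\min$ for which the constants align; the rest is a standard trust-region computation.
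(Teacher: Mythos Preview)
Your proof is correct and follows essentially the same route as the paper: bound the second-order remainder by $\tfrac12 L_h\kappabmh\Delta_k^2$, note that $(g^k)^\trans\hat s=-\Delta_k\|g^k\|$, and then use the acceptability condition $\Delta_k<\eta_2\|g^k\|$ together with~\eqref{eq:eta_max} to absorb the quadratic term. The only cosmetic difference is that you bound $\|\nabla^2 m^f_k\|$ directly via the componentwise estimate $|[d^k]_i|\le L_h$, whereas the paper bounds the vector Taylor remainder $\|M(x^k)+\nabla M(x^k)^\trans\hat s-M(x^k+\hat s)\|$ componentwise and then applies Cauchy--Schwarz with $\|d^k\|\le L_h$; both yield the identical constant.
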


\begin{proof}
  From \subassref{h}{func}, \defref{constants}, \defref{useful_sets}, \eqref{eq:convex_qp}, and \eqref{eq:gk_dk_def}, we know that
	\begin{equation}\label{eq:normd}
		\|d^k\| = \|D^k\lambda^*\| \le \max_{d\in\mathbb{D}^k} \|d\| \le L_h.
	\end{equation}
	Moreover, from \assref{flmodels} and \defref{constants}, for any $s\in\mathbb{R}^n$ we have
	\begin{equation}\label{eq:norm_diff}
	\begin{aligned}
		&\|M(x^k) + \nabla M(x^k)^\trans s - M(x^k+s)\| \\
		 \le &\sum_{i=1}^p|m^{F_i}(x^k) + \nabla m^{F_i}(x^k)^\trans s - m^{F_i}(x^k + s)| 
		 \le \sum_{i=1}^p \frac{1}{2}\kappaibmh\|s\|^2 = \frac{1}{2}\kappabmh\|s\|^2,
	\end{aligned}
	\end{equation}
  where the last inequality comes from Taylor's theorem. Combining \eqref{eq:normd}, \eqref{eq:norm_diff}, and the definition of $\hat{s}$, we have
	\begin{equation*}
	\begin{aligned}
	&(d^k)^\trans(M(x^k) + \nabla M(x^k)^\trans \hat{s} - M(x^k + \hat{s})) \\
	\ge &-\|d^k\|\|M(x^k) + \nabla M(x^k)^\trans \hat{s} - M(x^k + \hat{s})\| 
	\ge -\frac{1}{2}L_h\kappabmh\|\hat{s}\|^2 = -\frac{1}{2}L_h\kappabmh\Delta_k^2.
	\end{aligned}
	\end{equation*}
	Using \eqref{eq:eta_max}, we get \eqref{eq:fraction_cauchy_decrease} by
	\begin{equation*}
	\begin{aligned}
	&\left\langle M(x^k) - M(x^k+\hat{s}), d^k \right\rangle \ge -(d^k)^\trans\nabla M(x^k)^\trans \hat{s} - \frac{1}{2}L_h\kappabmh\Delta_k^2 \\
	= &-(g^k)^\trans\hat{s} - \frac{1}{2}L_h\kappabmh\Delta_k^2 = \|g^k\|\Delta_k - \frac{1}{2}L_h\kappabmh\Delta_k^2 \\
	\ge &\|g^k\|\Delta_k - \frac{1}{2}\|g^k\|\Delta_k = \frac{1}{2}\|g^k\|\Delta_k 
	\ge \frac{\kappad}{2}\|g^k\|\min\left\{\Delta_k,\frac{\|g^k\|}{L_h\kappabmh}\right\}.
	\end{aligned}
	\end{equation*}
\end{proof}

\lemref{linearization_condition} is a technical result concerning decrease in the objective function that we will employ in \secref{analysis2}. 

\begin{lemma}\label{lem:linearization_condition}
 Let \assrefrange{f}{generators} hold. If iteration $k$ of \algref{manifoldsampling} is acceptable,
 then
  \begin{equation}\label{eq:ht_def}
      h(F(x^k)) - h(F(x^k+s^k)) \geq (d^k)^\trans (F(x^k)-F(x^k+s^k)) - \bigC\Delta_k^2
  \end{equation}
  for $\bigC$ as in \defref{constants}.
\end{lemma}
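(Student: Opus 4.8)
The proof starts from inequality~\eqref{eq:baoyu_decrease} --- satisfied by the pair $(z,j)$ returned by the subroutine of \algref{manifoldsampling} that produces $s^k$, $j$, and $z$ --- and reduces the claim to an estimate on $(\nabla h_j(z)-d^k)^\trans(F(x^k)-F(x^k+s^k))$. Since iteration $k$ is acceptable, \linerefa{j_in_Act_test} passed, so $j\in\Act{\mathbb{Z}^k}$, the returned $(z,j)$ satisfies~\eqref{eq:baoyu_condition}, and $\|s^k\|\le\Delta_k$. The first task is to exhibit a point $z'\in\mathbb{Z}^k$ with $j\in\Act{z'}$ that \emph{also} satisfies~\eqref{eq:obtuse_property}. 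The subroutine returns in one of two ways. If its inner conditional was not entered, then, since $j\in\Act{\mathbb{Z}^k}$, the negation of that conditional forces the existence of a $z'\in\mathbb{Z}^k$ satisfying~\eqref{eq:matt_condition}, as needed. If the inner conditional was entered, then $s^k=-\Delta_k g^k/\|g^k\|$ by \lemref{approx_solve}, and for \emph{any} $z'\in\mathbb{Z}^k$ with $j\in\Act{z'}$ one has $\nabla M(x^k)\nabla h_j(z')\in\Gen^k$; since $g^k=\Proj(0,\cop{\Gen^k})=\nabla M(x^k)d^k$, the projection theorem gives $(d^k)^\trans\nabla M(x^k)^\trans\nabla M(x^k)(\nabla h_j(z')-d^k)\ge 0$, hence $(s^k)^\trans\nabla M(x^k)(\nabla h_j(z')-d^k)\le 0$, which is~\eqref{eq:obtuse_property}. (In this second case the subroutine re-derives a pair $(z,j)$ satisfying~\eqref{eq:baoyu_condition} for the new $s^k$, and acceptability again forces the returned $j$ into $\Act{\mathbb{Z}^k}$, so a suitable $z'$ exists for it too.)

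With $j$ and $z'$ now fixed, \eqref{eq:baoyu_decrease} reads $h(F(x^k))-h(F(x^k+s^k))\ge\nabla h_j(z)^\trans(F(x^k)-F(x^k+s^k))$, so it suffices to prove
$$(\nabla h_j(z)-d^k)^\trans(F(x^k)-F(x^k+s^k))\ \ge\ -\bigC\Delta_k^2.$$
I would then Taylor-expand $F$ about $x^k$: by \subassref{f}{Fgrad} and $\|s^k\|\le\Delta_k$, $F(x^k)-F(x^k+s^k)=-\nabla F(x^k)^\trans s^k+r$ with $\|r\|\le\tfrac12\LnF\Delta_k^2$. Using $\|d^k\|\le\Lh$ and $\|\nabla h_j(z)\|\le\Lh$ (both following from \subassref{h}{func}; cf.~\eqref{eq:normd}), the term $(\nabla h_j(z)-d^k)^\trans r$ is bounded below by $-\Lh\LnF\Delta_k^2$.

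It remains to lower-bound the leading term $-(s^k)^\trans\nabla F(x^k)(\nabla h_j(z)-d^k)$. The idea is to replace $\nabla F(x^k)$ by $\nabla M(x^k)$ and $z$ by $z'$ in order to expose~\eqref{eq:obtuse_property}, via
\begin{align*}
-(s^k)^\trans\nabla F(x^k)(\nabla h_j(z)-d^k) &= -(s^k)^\trans\nabla M(x^k)(\nabla h_j(z')-d^k) \\
&\quad + (s^k)^\trans(\nabla M(x^k)-\nabla F(x^k))(\nabla h_j(z')-d^k) \\
&\quad + (s^k)^\trans\nabla F(x^k)(\nabla h_j(z')-\nabla h_j(z)).
\end{align*}
The first summand is $\ge 0$ by~\eqref{eq:obtuse_property}. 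The second is at least $-2\Lh\kappag\Delta_k^2$, using $\|\nabla M(x^k)-\nabla F(x^k)\|\le\kappag\Delta_k$ (full linearity; \assref{flmodels} and \defref{flmodels}) and $\|\nabla h_j(z')-d^k\|\le 2\Lh$. For the third, both $z$ and $z'$ lie in $\cB(F(x^k);L_F\Delta_k)$ --- the point $z$ because $z\in\cop{\{F(x^k),F(x^k+s^k)\}}$ and $\|F(x^k)-F(x^k+s^k)\|\le L_F\Delta_k$ by \subassref{f}{Fval}, and $z'$ by \assref{generators} --- so $\|\nabla h_j(z')-\nabla h_j(z)\|\le 2L_{\nabla h}L_F\Delta_k$ by \subassref{h}{grad}, which together with $\|\nabla F(x^k)\|\le L_F$ bounds the third summand below by $-2L_{\nabla h}L_F^2\Delta_k^2$. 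Adding these three bounds to the contribution of the $r$-term yields precisely $-(2\Lh\kappag+2L_{\nabla h}L_F^2+\Lh\LnF)\Delta_k^2=-\bigC\Delta_k^2$ (cf.~\defref{constants}), which gives~\eqref{eq:ht_def}.

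I expect the main obstacle to be the first paragraph: certifying that every acceptable iteration furnishes a \emph{single} index $j$ together with a point $z'\in\mathbb{Z}^k$ that is simultaneously essentially active for $j$ and obtuse in the sense of~\eqref{eq:obtuse_property}. This requires carefully following both exit branches of the $s^k$-generating subroutine and, in the default-step branch, invoking the projection characterization of $g^k$. The telescoping estimate that follows is routine but demands care in matching the constants to \defref{constants}.
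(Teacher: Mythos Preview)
Your proposal is correct and follows essentially the same route as the paper: establish \eqref{eq:obtuse_property} for some $z'\in\mathbb{Z}^k$ by case-splitting on the two exit branches of the subroutine (invoking the projection theorem for the default-step branch), then Taylor-expand $F(x^k)-F(x^k+s^k)$ and decompose the leading $\nabla F(x^k)$ term by replacing $z\to z'$ and $\nabla F(x^k)\to\nabla M(x^k)$, bounding each residual with the same constants. The only cosmetic difference is that the paper works with upper bounds on $h(F(x^k+s^k))-h(F(x^k))-(d^k)^\trans(F(x^k+s^k)-F(x^k))$ while you work with lower bounds on the sign-flipped quantity, and the paper splits $z\to z'$ and $\nabla F\to\nabla M$ in two successive lines rather than in a single three-term identity.
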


\begin{proof}
  Because iteration $k$ is acceptable, there is some point \linebreak[4] $z \in
  \cop{\{F(x^k),F(x^k + s^k)\}}$ and $j\in\mathbb{A}(\mathbb{Z}^k)$ such that
  \eqref{eq:baoyu_decrease} holds. 
  Furthermore, either some $z'\in\mathbb{Z}^k\subset \R^p$ with $j\in\Act{z'}$
  exists that satisfies \eqref{eq:obtuse_property}, or $s^k$ is generated
  following \lemref{approx_solve} with $\nabla M(x^k)\nabla
  h_j(z')\in\cop{\Gen^k}$.  We show that in either case
  \begin{equation}\label{eq:obtuse_projection}
  (\nabla h_j(z') - d^k)^\trans \nabla M(x^k)^\trans s^k \leq 0.
  \end{equation}
  
  When \eqref{eq:obtuse_property} is satisfied by $z'\in\mathbb{Z}^k$ and $j\in\Act{z'}$, \eqref{eq:obtuse_projection}
  follows immediately from \eqref{eq:obtuse_property}.
  When $s^k$ is generated from \lemref{approx_solve} with $\nabla M(x^k)\nabla h_j(z')\in\cop{\Gen^k}$, then 
  $s^k = -\Delta_k\frac{g^k}{\|g^k\|}$.
  By the classical projection theorem (see, e.g.,~\cite[Theorem 2.39]{rockafellar2009variational}),
  \begin{equation*}
  (\nabla h_j(z') - d^k)^\trans \nabla M(x^k)^\trans s^k = -(\nabla h_j(z') - d^k)^\trans \nabla M(x^k)^\trans \Delta_k\frac{g^k}{\|g^k\|} \leq 0,
  \end{equation*}
  which is exactly \eqref{eq:obtuse_projection}.

  From \subassref{f}{Fgrad},
  $
  |F_i(x^k + s^k) - F_i(x^k) - \nabla F_i(x^k)^\trans  s^k| \leq \frac{\LnFi}{2}\|s^k\|^2
  $
  for $i=1,\ldots,p$. By the definition of $\LnF$ in \defref{constants},
  \begin{equation}\label{eq:intermediate}
  \|F(x^k + s^k) - F(x^k) - \nabla F(x^k)^\trans  s^k\| \leq \frac{\LnF}{2}\|s^k\|^2.
  \end{equation}
  Therefore, by the Cauchy--Schwarz inequality,
  \begin{equation}\label{eq:int1}
  \begin{aligned}
  & (\nabla h_j(z) - d^k)^\trans (F(x^k + s^k) - F(x^k) - \nabla F(x^k)^\trans  s^k)\\
  &\leq \|\nabla h_j(z) - d^k\| \|F(x^k + s^k) - F(x^k) - \nabla F(x^k)^\trans  s^k\| \leq (2L_h) \frac{\LnF}{2}\|s^k\|^2 .
  \end{aligned}
  \end{equation}

  Thus,
  \begin{equation*}
      \begin{aligned}
      &h(F(x^k + s^{k})) - h(F(x^k)) - (d^k)^\trans (F(x^k + s^{k}) - F(x^k)) \\
      \leq &(\nabla h_j(z) - d^k)^\trans (F(x^k + s^{k}) - F(x^k)) \\
      \leq &(\nabla h_j(z) - d^k)^\trans \nabla F(x^k)^\trans s^{k} + L_h\LnF \|s^{k}\|^2 \\
      = &(\nabla h_j(z') - d^k)^\trans \nabla F(x^k)^\trans s^{k} + (\nabla h_j(z) - \nabla h_j(z'))^\trans \nabla F(x^k)^\trans s^{k} + L_h\LnF \|s^{k}\|^2 \\
      \leq &(\nabla h_j(z') - d^k)^\trans (\nabla M(x^k)^\trans s^{k} + (\nabla F(x^k) - \nabla M(x^k))^\trans s^{k}) \\
        & + 2L_{\nabla h}L_F^2\Delta_k^2 + L_h\LnF\Delta_k^2 \\
      = &(\nabla h_j(z') - d^k)^\trans \nabla M(x^k)^\trans s^k + (\nabla h_j(z') - d^k)^\trans (\nabla F(x^k) - \nabla M(x^k))^\trans s^{k}  \\
      &+ 2L_{\nabla h}L_F^2\Delta_k^2 + L_h\LnF\Delta_k^2  \\
      \leq &0 + 2L_h\kappag\Delta_k^2 + 2L_{\nabla h}L_F^2\Delta_k^2 + L_h\LnF\Delta_k^2 
      = \bigC \Delta_k^2,
      \end{aligned}
  \end{equation*}
  where the first inequality comes from~\eqref{eq:baoyu_decrease}; the second
  inequality comes from \eqref{eq:int1}; 
  the third
  inequality comes from \subassref{f}{Fval}, \subassref{h}{grad}, and \assref{generators}; and the
  last inequality comes from \subassref{h}{func}, \assref{flmodels}, and
  \eqref{eq:obtuse_projection}.
 \end{proof}

Between \lemref{approx_solve} and \lemref{linearization_condition}, we have established that
every acceptable iterate in \algref{manifoldsampling}
satisfies~\eqref{eq:fraction_cauchy_decrease} and~\eqref{eq:ht_def} simultaneously.

\section{Analysis of Manifold Sampling}\label{sec:analysis2}
We now show that cluster points of the sequence of iterates
generated by \algref{manifoldsampling} are Clarke stationary. The proof 
uses the following sequence of results.
\begin{description}[leftmargin=0.2in,labelindent=0in,labelsep=0.4em]
  \item[\lemref{success_when_linear}] shows that when the trust-region radius
    $\Delta_k$ is a sufficiently small multiple of $\left\| g^k \right\|$, the norm of the master model gradient,
    the iteration will be successful.
  \item[\lemref{delta_to_0_linear}] shows that $\ds\lim_{k\to\infty}\Delta_k=0$.
  \item[\lemref{g_to_0_linear}] shows that as $k\to\infty$, a subsequence of master model
    gradients $g^k$ must go to zero as well.
  \item[\lemref{g_to_v_linear}] shows that zero is in the generalized Clarke
    subdifferential $\partialC f(x^*)$ of any cluster point $x^*$ of any subsequence of iterates where
    the master model gradients go to zero.
  \item[\thmref{cluster_linear}] shows that $0 \in \partialC f(x^*)$ for any 
    cluster point $x^*$ of the
    sequence of iterates generated by \algref{manifoldsampling}.
\end{description}
We remark that the proofs of \lemref{g_to_0_linear},
\lemref{g_to_v_linear}, and \thmref{cluster_linear} are similar to analogous
results in~\cite{KLW18, LMW16}, but we have included them for completeness.

We first demonstrate that 
a successful iteration occurs whenever 
the trust-region radius 
is smaller than a constant multiple of 
the norm of the master model gradient.

\begin{lemma}\label{lem:success_when_linear}
  Let \assreftwo{h}{flmodels} hold. If an iteration is acceptable and 
  \begin{equation}\label{eq:delta_bound_linear}
    \Delta_k < \frac{\kappad(1-\eta_1)}{4 \kappaf \Lh} \|g^k\|
  \end{equation}
  (where the pathological case of $\kappaf = 0$ or $\Lh=0$ results in an
  infinite right-hand side),
  then $\rho_k > \eta_1$ in
  \algref{manifoldsampling}, and the iteration is successful.
\end{lemma}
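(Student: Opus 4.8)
The plan is to bound $|\rho_k-1|$ and show it is strictly less than $1-\eta_1$, which immediately yields $\rho_k>\eta_1$ and hence a successful iteration via the test in \linerefa{successCheck}.

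First I would pin down the denominator of $\rho_k$ in \eqref{eq:rho_ht}. Since the iteration is acceptable, the step $s^k$ satisfies the sufficient-decrease inequality \eqref{eq:fraction_cauchy_decrease}; moreover, acceptability gives $\|g^k\|\ge L_h\kappabmh\Delta_k$ (this is \eqref{eq:result_of_params}), so the minimum appearing in \eqref{eq:fraction_cauchy_decrease} equals $\Delta_k$. Therefore
\[
\left\langle M(x^k)-M(x^k+s^k),d^k\right\rangle \;\ge\; \tfrac{\kappad}{2}\|g^k\|\Delta_k,
\]
which is strictly positive because the hypothesis \eqref{eq:delta_bound_linear} forces $\|g^k\|>0$.

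Next I would bound the numerator of $\rho_k-1$. Writing
\[
\rho_k-1 = \frac{\bigl\langle \bigl(F(x^k)-M(x^k)\bigr)-\bigl(F(x^k+s^k)-M(x^k+s^k)\bigr),\,d^k\bigr\rangle}{\left\langle M(x^k)-M(x^k+s^k),d^k\right\rangle},
\]
Cauchy--Schwarz and the triangle inequality bound the numerator by $\|d^k\|\bigl(\|F(x^k)-M(x^k)\|+\|F(x^k+s^k)-M(x^k+s^k)\|\bigr)$. The componentwise full-linearity estimates of \defref{flmodels} (summed over the $p$ components, using $\|\cdot\|_2\le\|\cdot\|_1$), together with $\|s^k\|\le\Delta_k$, give $\|F(x^k)-M(x^k)\|\le\kappaf\Delta_k^2$ and $\|F(x^k+s^k)-M(x^k+s^k)\|\le\kappaf\Delta_k^2$; and $\|d^k\|\le L_h$ by \eqref{eq:normd}. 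Hence the numerator is at most $2L_h\kappaf\Delta_k^2$, so
\[
|\rho_k-1| \;\le\; \frac{2L_h\kappaf\Delta_k^2}{\tfrac{\kappad}{2}\|g^k\|\Delta_k} \;=\; \frac{4L_h\kappaf\Delta_k}{\kappad\|g^k\|}.
\]
Substituting the hypothesis \eqref{eq:delta_bound_linear} makes the right-hand side strictly less than $1-\eta_1$, so $\rho_k>\eta_1$.

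The only delicate point is the first step: one must use acceptability twice — once to invoke \eqref{eq:fraction_cauchy_decrease} and once, through \eqref{eq:result_of_params}, to collapse its minimum to $\Delta_k$ — and to observe that the denominator is genuinely positive. Everything after that is a routine trust-region-style estimate, essentially identical in spirit to the analogous arguments in \cite{LMW16, KLW18}.
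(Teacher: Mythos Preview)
Your argument is correct and is essentially the paper's own proof: bound $|\rho_k-1|$ by $2L_h\kappaf\Delta_k^2$ over $\tfrac{\kappad}{2}\|g^k\|\Delta_k$ using full linearity, $\|d^k\|\le L_h$, \eqref{eq:fraction_cauchy_decrease}, and \eqref{eq:result_of_params}, then invoke \eqref{eq:delta_bound_linear}. The only cosmetic difference is that the paper handles the denominator bound inline rather than isolating it first.
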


\begin{proof}
 Because the iteration is acceptable, $g^k \neq 0$, and so the
 right-hand side of~\eqref{eq:delta_bound_linear}  is positive. 
  Using the definition of $\rho_k$ in~\eqref{eq:rho_ht},
we have
  \begin{align}\label{eq:num_bound}
      1-\rho_k\leq & \left| \rho_k - 1 \right| \nonumber \\
      = &\left| \frac{\langle F(x^k) - F(x^k+s^k), d^k \rangle} 
      { \langle M(x^k) - M(x^k+s^k), d^k \rangle} - 1\right|\nonumber \\
     = & \displaystyle\frac{\left| \left\langle F(x^k) - F(x^k + s^k), d^k \right\rangle - \left\langle M(x^k) - M(x^k+s^k), d^k \right\rangle \right|}{\langle M(x^k) - M(x^k+s^k), d^k \rangle} \nonumber\\
      \le&\, \displaystyle\frac{\left\| F(x^k) - M(x^k) \right\| \left\| d^k \right\| + \left\| F(x^k+s^k) - M(x^k+s^k) \right\| \left\| d^k \right\|}{\langle M(x^k) - M(x^k+s^k), d^k \rangle}\nonumber\\
      \le&\, \displaystyle\frac{2 \kappaf \Lh \Delta_k^2}{\langle M(x^k) - M(x^k+s^k), d^k \rangle} \nonumber\\
      \le&\, \displaystyle\frac{4 \kappaf \Lh \Delta_k^2}{\kappad \left\| g^k \right\| \min \left\{
      \Delta_k, \frac{\left\| g^k \right\|}{L_h\kappabmh} \right\}} & \jtext{by~\eqref{eq:fraction_cauchy_decrease}} \nonumber\\
      = &\frac{4 \kappaf \Lh \Delta_k}{\kappad \left\| g^k \right\|} & \jtext{by~\eqref{eq:result_of_params}},
    \end{align}
    where the second inequality holds by \assref{flmodels}, \subassref{h}{func}, and the fact that $\|s^k\|\leq\Delta_k$ and $\|d^k\|\leq\Lh$.
  Applying~\eqref{eq:delta_bound_linear} to~\eqref{eq:num_bound} yields
  \[
  1-\rho_k \le \frac{4 \kappaf \Lh \Delta_k}{\kappad \left\| g^k
  \right\| } < 1-\eta_1.
  \]
  Thus, $\rho_k>\eta_1$ if $\Delta_k$ satisfies~\eqref{eq:delta_bound_linear}, 
  and the iteration is successful.
\end{proof}

We now show that the sequence of trust-region radii converges to zero.

\begin{lemma}\label{lem:delta_to_0_linear}
  Let \assrefrange{f}{generators} hold. If 
$\{x^k,\Delta_k\}_{k\in\mathbb{N}}$ is generated by
  \algref{manifoldsampling}, 
  then the sequence
  $\{f(x^k)\}_{k\in\mathbb{N}}$ is nonincreasing, and $\ds\lim_{k\to\infty}\Delta_k=0$.
\end{lemma}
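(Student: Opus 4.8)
The plan is to establish the two claims in sequence. The ``engine'' for both is a quantitative lower bound on the decrease of $f$ achieved on a successful iteration, obtained by chaining together the sufficient-decrease condition \eqref{eq:fraction_cauchy_decrease}, the acceptability test, the $\rho_k$ test, and \lemref{linearization_condition}. Specifically, on a successful iteration $k$ (which is necessarily acceptable) one has $g^k\neq 0$ and $\Delta_k<\eta_2\|g^k\|$; since $\eta_2<\etamax\le(\Lh\kappabmh)^{-1}$ by \eqref{eq:eta_max}, the minimum in \eqref{eq:fraction_cauchy_decrease} equals $\Delta_k$ (cf.\ \eqref{eq:result_of_params}), so combining \eqref{eq:fraction_cauchy_decrease} with $\rho_k>\eta_1$ and the definition \eqref{eq:rho_ht} gives $\langle F(x^k)-F(x^k+s^k),d^k\rangle>\tfrac{\eta_1\kappad}{2}\|g^k\|\Delta_k$. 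Feeding this into \lemref{linearization_condition} and absorbing $\bigC\Delta_k^2<\bigC\eta_2\|g^k\|\Delta_k\le\tfrac{\eta_1\kappad}{4}\|g^k\|\Delta_k$ (again by \eqref{eq:eta_max}) yields
\begin{equation*}
 f(x^k)-f(x^{k+1}) \;=\; h(F(x^k))-h(F(x^k+s^k)) \;>\; \tfrac{\eta_1\kappad}{4}\|g^k\|\Delta_k \;>\;0 .
\end{equation*}

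From here, monotonicity of $\{f(x^k)\}$ follows: on an unsuccessful iteration $x^{k+1}=x^k$, so $f$ is unchanged, and on a successful iteration $f$ strictly decreases by the display above. I would organize this as an induction on $k$, because applying \lemref{linearization_condition} and \assrefrange{f}{flmodels} at step $k$ requires $x^k,x^k+s^k\in\cLmax$: the inductive hypothesis $f(x^0)\ge\cdots\ge f(x^k)$ places $x^0,\dots,x^k\in\cL(x^0)$, and then $\|s^k\|\le\Delta_k\le\Deltamax$ gives $x^k+s^k\in\cB(x^k;\Deltamax)\subseteq\cLmax$; the displayed inequality closes the induction at $k+1$. (Any extra points evaluated during model construction lie in $\cLmax$, and sample points in $\mathbb{Z}^k$ lie in $\hdom$, by \assref{flmodels}, \assref{generators}, and the discussion following \linerefa{grow_delta}.)

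For $\lim_{k\to\infty}\Delta_k=0$ I would split into two cases. If \algref{manifoldsampling} has only finitely many successful iterations, then past the last one every iteration is unsuccessful (or unacceptable) and $\Delta_{k+1}=\gammad\Delta_k$ with $\gammad\in(0,1)$, so $\Delta_k\to 0$. Otherwise, enumerate the successful iterations $k_1<k_2<\cdots$; since $\cL(x^0)$ is compact ($f$ is continuous and $\cL(x^0)$ is bounded by \subassref{f}{level}), $f$ is bounded below on it, so telescoping the per-iteration decrease over successful indices and using $\|g^k\|>\Delta_k/\eta_2$ on acceptable iterations gives
\begin{equation*}
 \sum_{i\ge 1}\Delta_{k_i}^2 \;\le\; \frac{4\eta_2}{\eta_1\kappad}\sum_{i\ge 1}\bigl(f(x^{k_i})-f(x^{k_i+1})\bigr) \;\le\; \frac{4\eta_2}{\eta_1\kappad}\Bigl(f(x^0)-\inf\nolimits_{x\in\cL(x^0)}f(x)\Bigr) \;<\;\infty ,
\end{equation*}
whence $\Delta_{k_i}\to 0$. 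To pass to the whole sequence, note that for $k_i\le k<k_{i+1}$ the iterations in $(k_i,k)$ are unsuccessful and $\Delta_{k_i+1}=\min\{\gammai\Delta_{k_i},\Deltamax\}\le\gammai\Delta_{k_i}$, so $\Delta_k\le\gammai\Delta_{k_i}$; since $\gammai\ge 1$ is fixed and $\Delta_{k_i}\to 0$, we get $\Delta_k\to 0$.

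I expect the main obstacle to be the first part: arranging the cascade of constants through \eqref{eq:fraction_cauchy_decrease}, the acceptability and $\rho_k$ tests, the definition \eqref{eq:eta_max} of $\etamax$, and \lemref{linearization_condition} so that the error term $\bigC\Delta_k^2$ is genuinely dominated and the net decrease is provably positive, while simultaneously running the induction that keeps all evaluated points in $\cLmax$ (a fact needed to invoke the earlier lemmas, yet itself a consequence of the monotonicity being proved). Once that is in place, the $\Delta_k\to 0$ step is the standard trust-region summability argument.
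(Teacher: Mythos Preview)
Your proposal is correct and follows essentially the same route as the paper: chain \lemref{linearization_condition}, the definition of $\rho_k$, the sufficient-decrease condition, and the bound \eqref{eq:eta_max} to get strictly positive $f$-decrease on successful iterations, then run the standard summability/geometric-decay dichotomy for $\Delta_k\to 0$. The only cosmetic differences are that the paper arranges the constants to obtain $f(x^k)-f(x^{k+1})>\bigC\Delta_k^2$ rather than your $\tfrac{\eta_1\kappad}{4}\|g^k\|\Delta_k$ (equivalent up to one more use of $\Delta_k<\eta_2\|g^k\|$), and that you make explicit the induction ensuring $x^k,x^k+s^k\in\cLmax$, which the paper leaves to a remark following \linerefa{grow_delta}.
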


\begin{proof}
  If iteration $k$ is unsuccessful, then $\Delta_{k+1}<\Delta_k$, and
  $x^{k+1}=x^k$; therefore, $f(x^{k+1})=f(x^k)$. On successful iterations $k$, by \assrefrange{f}{flmodels}, we know that
  \begin{equation}
  \begin{aligned}
    \label{eq:nonincreasing_sequence}
  	f(x^k) - f(x^{k+1}) &\ge (d^k)^\trans (F(x^k) - F(x^{k+1})) - \bigC\Delta_k^2 & \jtext{by~\eqref{eq:ht_def}} \\
  	&= \rho_k (d^k)^\trans (M(x^k) - M(x^{k+1})) - \bigC\Delta_k^2 & \jtext{by~\eqref{eq:rho_ht}}\\
  	&\ge \rho_{k}\frac{\kappad}{2}\|g^{k}\|\min\left\{\Delta_{k},\frac{\|g^{k}\|}{L_h\kappabmh}\right\} - \bigC\Delta_{k}^2 & \jtext{by~\eqref{eq:fraction_cauchy_decrease}} \\
  	&= \rho_{k}\frac{\kappad}{2}\|g^{k}\|\Delta_{k} - \bigC\Delta_{k}^2 > \eta_1\frac{\kappad}{2}\|g^{k}\|\Delta_{k} - \bigC\Delta_{k}^2 & \jtext{by~\eqref{eq:eta_max}}.
  \end{aligned}
  \end{equation}
  Based on \eqref{eq:nonincreasing_sequence}, if $C > 0$, we have 
  \begin{equation}
  \begin{aligned}
   \label{eq:nonincreasing_sequence_C_nonzero}
   f(x^k) - f(x^{k+1}) &> \eta_1\frac{\kappad}{2}\|g^{k}\|\Delta_{k} - \bigC\Delta_{k}^2 & \jtext{by~\eqref{eq:nonincreasing_sequence}} \\
   &> \eta_1\frac{\kappad}{2}\frac{4\bigC}{\eta_1\kappad}\Delta_{k}^2 - \bigC\Delta_{k}^2 = \bigC\Delta_{k}^2 > 0 & \jtext{by~\eqref{eq:eta_max}}.
  \end{aligned}
  \end{equation}
  On the other hand, if $C = 0$, by \eqref{eq:nonincreasing_sequence} we have
  \begin{equation}
  \label{eq:nonincreasing_sequence_C_zero}
  f(x^k) - f(x^{k+1}) > \eta_1\frac{\kappad}{2}\|g^{k}\|\Delta_{k} > \frac{\kappad\eta_1}{2\eta_2}\Delta_{k}^2 > 0.
  \end{equation}

  Thus, the sequence $\{f(x^k)\}_{k\in\mathbb{N}}$ is nonincreasing.  

  To show that $\Delta_k \to 0$, we separately consider the cases when there
  are infinitely or finitely many successful iterations.
  First, suppose that there are infinitely many successful iterations, indexed by $\left\{
  k_j \right\}_{j\in\mathbb{N}}$. Since $f(x^k)$ is nonincreasing in
  $k$ and $f$ is bounded below
  (by \subassref{f}{level}, \subassref{f}{Fval}, and \subassref{h}{func}), 
  the sequence $\{f(x^k)\}_{k\in\mathbb{N}}$ converges
  to some limit $f^*\leq f(x^0)$.  Thus, having infinitely
  many successful iterations (indexed $\{k_j\}_{j \in \mathbb{N}}$) implies that there exists a positive constant $\overline{C} >0$ such that
  \begin{equation}
  \begin{aligned}
    \label{eq:eta_delta_normg_linear}
      \infty
      &> f(x^0) - f^*
      \ge \ds\sum_{j=0}^\infty f(x^{k_j}) - f(x^{k_{j+1}}) > \ds\sum_{j=0}^\infty  \overline{C}\Delta_{k_j}^2
  \end{aligned}
  \end{equation}
  by \eqref{eq:nonincreasing_sequence_C_nonzero} and \eqref{eq:nonincreasing_sequence_C_zero}.
  It follows that $\Delta_{k_j} \to 0$ for the sequence of
  successful iterations. Observe that
  $\Delta_{k_j+1}\le\gammai\Delta_{k_j}$ and that
  $\Delta_{k+1}=\gammad\Delta_k<\Delta_k$ if iteration $k$ is
  unsuccessful.  Thus, for any unsuccessful iteration $k>k_j$,
  $\Delta_k\leq\gammai\Delta_q$, where $q\defined\max\{k_j\colon j\in\mathbb{N},\,k_j<k\}$. 
  It follows immediately that 
  $
  0\leq \lim_{k\to\infty}\Delta_k \leq
  \gammai\lim_{j\to\infty}\Delta_{k_j} = 0,
  $
and so $\Delta_k \to 0$ in this case. 

Next, suppose there are only finitely many successful iterations and
let $\nu\in\mathbb{N}$ be the number of successful iterations.
Since $\gammad<1\leq\gammai$, it follows that $0\leq\Delta_k\leq
\gammai^\nu\gammad^{k-\nu}\Delta_0$ for each $k\in\mathbb{N}$. Thus,
$\Delta_k \to 0$.
\end{proof}

We now show that the norms of the master model gradients are not bounded away
from zero.

\begin{lemma} \label{lem:g_to_0_linear}
  Let \assrefrange{f}{generators}  hold. If the sequence 
  $\{x^k,\Delta_k\}_{k\in\mathbb{N}}$ is generated by
  \algref{manifoldsampling}, then 
  $\ds\liminf_{k \to \infty} \| g^k \| = 0$. 
\end{lemma}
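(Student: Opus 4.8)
The plan is to argue by contradiction, following the standard trust-region template. Suppose $\liminf_{k\to\infty}\|g^k\| > 0$; then there exists $\epsilon > 0$ and an index $K$ such that $\|g^k\| \ge \epsilon$ for all $k \ge K$. By \lemref{delta_to_0_linear} we know $\Delta_k \to 0$, so there exists $K' \ge K$ such that for all $k \ge K'$ we have $\Delta_k < \eta_2 \epsilon \le \eta_2 \|g^k\|$, which means every iteration $k \ge K'$ is acceptable (the test in \linerefa{acceptabilityCheck} is satisfied). Shrinking $\Delta_k$ further if necessary, we can also arrange that $\Delta_k < \frac{\kappad(1-\eta_1)}{4\kappaf\Lh}\|g^k\|$ for all $k$ past some $K'' \ge K'$, since the right-hand side is bounded below by $\frac{\kappad(1-\eta_1)}{4\kappaf\Lh}\epsilon > 0$ while $\Delta_k\to 0$.

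Now invoke \lemref{success_when_linear}: for every $k \ge K''$, the iteration is acceptable and satisfies \eqref{eq:delta_bound_linear}, so it is successful. Hence $\Delta_{k+1} = \min\{\gammai\Delta_k,\Deltamax\} \ge \Delta_k$ for all $k \ge K''$, because $\gammai \ge 1$. This means the sequence $\{\Delta_k\}$ is nondecreasing from $K''$ onward, contradicting $\lim_{k\to\infty}\Delta_k = 0$ (which forces $\Delta_k < \Delta_{K''}$ for large $k$). Therefore the supposition is false and $\liminf_{k\to\infty}\|g^k\| = 0$.

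The only delicate point is making sure the chain of "for all $k$ large enough" quantifiers is handled cleanly: we need a single threshold beyond which $k$ is simultaneously acceptable (via the $\eta_2$ test) and satisfies the hypothesis of \lemref{success_when_linear}. Both conditions are of the form $\Delta_k < (\text{positive constant})\cdot\|g^k\|$, and since $\|g^k\|$ is bounded below by $\epsilon$ on the tail while $\Delta_k\to 0$, both hold eventually; taking the maximum of the two thresholds suffices. After that, the contradiction with $\Delta_k\to 0$ is immediate from $\gammai\ge 1$. I expect no real obstacle here — this is a routine "if the stationarity measure stays bounded away from zero, the radius cannot shrink" argument — so the main care is just bookkeeping the indices and citing \lemref{success_when_linear} and \lemref{delta_to_0_linear} in the right order.
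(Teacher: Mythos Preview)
Your proof is correct and follows essentially the same contradiction template as the paper: assume $\|g^k\|\ge\epsilon$ on a tail, use \lemref{success_when_linear} (together with the acceptability test) to force successful iterations once $\Delta_k$ is small, and contradict \lemref{delta_to_0_linear}. The only cosmetic difference is that the paper obtains a uniform lower bound $\Delta_k\ge\min(\gammad D\epsilon,\Delta_j)$ via a short induction rather than first invoking $\Delta_k\to 0$ to pass to a tail of successful iterations; both routes yield the same contradiction.
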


\begin{proof}
  To obtain a contradiction, suppose there is an iteration $j$ and some
  $\epsilon>0$ for which $\|g^k\|\geq\epsilon$ for all $k\geq j$.  
  \algref{manifoldsampling} guarantees that $\Delta_j\geq\gammad^j\Delta_0>0$.
  With Assumptions~\ref{ass:h} and \ref{ass:flmodels}, any
  iteration
  where $\Delta_k < V \left\| g^k \right\|$ for
  $
  V \defined \min\left\{ \eta_2, \frac{\kappad (1-\eta_1)}{4 \kappaf
      \Lh} \right\}
      $
  will be successful because the conditions of
  \lemref{success_when_linear} are then satisfied.
  Therefore, by the contradiction hypothesis, any $k\geq j$ satisfying 
  $\Delta_k < V \epsilon$
  is guaranteed to be successful, in which case $\Delta_{k+1} = \gammai \Delta_k
  \ge \Delta_k$.  On the other hand, if $\Delta_k\ge V\epsilon$,
  then $\Delta_{k+1}\geq\gammad \Delta_k$.
  Under \assrefrange{f}{flmodels}, a straightforward inductive argument then yields $\Delta_k \geq
  \min(\gammad V\epsilon,\Delta_j)>0$ for all $k\geq j$,
  contradicting \lemref{delta_to_0_linear}.  Thus, no such
  $(j,\epsilon)$ pair exists, and so $\ds \liminf_{k\to\infty}\|g^k\|=0$.
\end{proof}

The next lemma shows that subsequences of iterates with master model gradients
converging to $0$ have cluster points that are Clarke stationary.
\algref{manifoldsampling} generates at least one
such subsequence of iterates by \lemref{g_to_0_linear}.

\begin{lemma}
\label{lem:g_to_v_linear}
Let \assrefrange{f}{generators} hold, and let 
$\{x^k,\Delta_k,g^k\}_{k\in\mathbb{N}}$ be a sequence  
generated by \algref{manifoldsampling}.
For any subsequence $\{k_j\}_{j\in\mathbb{N}}$ of acceptable iterations such 
that both
\[
\lim_{j\to\infty}\|g^{k_j}\|=0
\]
and $\{x^{k_j}\}_{j\in\mathbb{N}} \to x^*$ for some cluster point $x^*$, then $0 \in \partialC
f(x^*)$.
\end{lemma}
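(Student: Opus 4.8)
The goal is to show $0\in\partialC f(x^*)$, and the natural strategy is to produce, for each $j$, a point $v^{k_j}\in\partialC f(y^{k_j})$ for some $y^{k_j}$ near $x^*$, with $\|g^{k_j}-v^{k_j}\|$ small, and then pass to the limit. The machinery for this is \lemref{weird_v_approx}: that lemma says that whenever we take the generator set $\Gen$ built from $\nabla M(x)$ and active selection functions at $F(x)$, every element of $\cop{\Gen}$ is within $c_2\Delta$ of an element of the convex hull of true objects $\nabla F(y)\nabla h_j(F(y))$ at a nearby $y$. So first I would apply \lemref{weird_v_approx} with $x=y=x^{k_j}$, $\Delta = \Delta_{k_j}$ (or $L_F\Delta_{k_j}$, matching \assref{generators}), $I$ the index set active on $\mathbb{Z}^{k_j}$, and $J$ a larger index set chosen so that $\cH$ lands inside $\partialC f(x^{k_j})$. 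Since $g^{k_j}\in\cop{\Gen^{k_j}}$, this yields $v^{k_j}\in\cH\subseteq\partialC f(x^{k_j})$ with $\|g^{k_j}-v^{k_j}\|\le c_2\Delta_{k_j}$ (with $c_2$ adjusted for the $L_F$ factor).

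\textbf{Passing to the limit.} By \lemref{delta_to_0_linear} we have $\Delta_{k_j}\to 0$, and by hypothesis $\|g^{k_j}\|\to 0$, so $\|v^{k_j}\|\le \|g^{k_j}\| + c_2\Delta_{k_j}\to 0$, i.e. $v^{k_j}\to 0$. Meanwhile $x^{k_j}\to x^*$, and $v^{k_j}\in\partialC f(x^{k_j})$. The conclusion $0\in\partialC f(x^*)$ then follows from the outer semicontinuity (closed-graph property) of the Clarke subdifferential of a locally Lipschitz function: if $y^j\to x^*$ and $v^j\in\partialC f(y^j)$ with $v^j\to v$, then $v\in\partialC f(x^*)$. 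Here $f$ is locally Lipschitz near $x^*$ (it is a continuous selection of Lipschitz functions, with Lipschitz constant $L_h L_F$ on $\cLmax$, and $x^*\in\cLmax$ since $\cLmax$ is closed and contains all iterates by \lemref{delta_to_0_linear}), so this standard property applies.

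\textbf{The main obstacle.} The delicate step is the bookkeeping in the application of \lemref{weird_v_approx}: I must verify that the index set $I$ coming from $\mathbb{A}(\mathbb{Z}^{k_j})$, together with an appropriately enlarged $J$, produces an $\cH$ that is genuinely a subset of $\partialC f(x^{k_j})$. This requires knowing that each selection function active in the sense of \defref{pc1manifold} at a point $z$ near $F(x^{k_j})$ contributes a gradient $\nabla F(x^{k_j})\nabla h_j(F(x^{k_j}))$ that is a limiting gradient of $f$ at $x^{k_j}$ --- or at least lies in $\partialC f(x^{k_j})$ --- which is precisely where the essential-activity definition and the continuous-selection structure (and the requirement that $\mathbb{Z}^{k_j}\subset\cB(F(x^{k_j}); L_F\Delta_{k_j})$, so that the relevant $z$'s shrink toward $F(x^{k_j})$) must be invoked carefully. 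A secondary technical point is making sure $x^*\in\cLmax$ so that all the Lipschitz assumptions hold at the limit; this follows from \lemref{delta_to_0_linear} and closedness of $\cLmax$, but should be stated explicitly. Everything else --- the two triangle-inequality bounds and the limit --- is routine.
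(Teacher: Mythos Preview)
Your overall strategy---produce $v^{k_j}$ near $g^{k_j}$ in some Clarke subdifferential, then pass to the limit---is sound and matches the paper's. But your choice $y=x^{k_j}$ in \lemref{weird_v_approx}, aiming to land $v^{k_j}$ in $\partialC f(x^{k_j})$, does not work, and this is precisely the obstacle you flag but do not resolve. The lemma requires $I\subseteq J$; you want $I=\Act{\mathbb{Z}^{k_j}}$ and $\cH\subseteq\partialC f(x^{k_j})$. Since $\partialC f(x^{k_j})=\cop{\{\nabla F(x^{k_j})\nabla h_j(F(x^{k_j})):j\in\Act{F(x^{k_j})}\}}$, this forces $J\subseteq\Act{F(x^{k_j})}$. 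But in general $\Act{\mathbb{Z}^{k_j}}\not\subseteq\Act{F(x^{k_j})}$: if $x^{k_j}$ sits strictly inside a single manifold while $\cB(x^{k_j};\Delta_{k_j})$ straddles a kink, then $\mathbb{Z}^{k_j}$ can sample other manifolds, yet $\partialC f(x^{k_j})$ is a singleton. No enlargement of $J$ repairs this.

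The paper's fix is to take $y=x^*$ and $J=\Act{F(x^*)}$ instead. The key observation (equation~\eqref{eq:activity_settles}) is that the essentially active set is \emph{maximal} at $x^*$ locally: there is $\bar\Delta>0$ with $\Act{F(y)}\subseteq\Act{F(x^*)}$ for all $y\in\cB(x^*;\bar\Delta)$. Since $x^{k_j}\to x^*$ and $\Delta_{k_j}\to 0$, eventually $\Act{\mathbb{Z}^{k_j}}\subseteq\Act{F(x^*)}$, so the inclusion $I\subseteq J$ holds and $\cH=\partialC f(x^*)$ directly. One then obtains $v(g^{k_j})\in\partialC f(x^*)$ with $\|g^{k_j}-v(g^{k_j})\|\to 0$, and closedness of $\partialC f(x^*)$ finishes the argument; there is no need to pass through subdifferentials at the moving points $x^{k_j}$.
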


\begin{proof}
By continuity of $F_i$ (\subassref{f}{Fval}), there exists
  $\bar\Delta > 0$ so that for all $\Delta\in [0,\bar\Delta]$, the manifolds
  active in $\cB(x^*;\Delta)$ are precisely the manifolds active at
  $x^{*}$; that is, 
  \begin{equation}
  \label{eq:activity_settles} 
  \mathbb{A}(F(x^{*})) = \bigcup_{y \in \cB(F(x^{*});L_F\Delta)} \mathbb{A}(y) \qquad 
    \mbox{ for all } \Delta \le \bar{\Delta}.
  \end{equation}
   Thus, because $\Delta_k\to 0$
   by \lemref{delta_to_0_linear} and because $\{x^{k_j}\}_{j\in\mathbb{N}}$
  converges to $x^*$ by supposition, 
  we may conclude that for $j$ sufficiently
  large, $\Act{\mathbb{Z}^{k_j}}\subseteq\Act{F(x^*)}$.
   By
  \lemref{weird_v_approx} with $I
  \gets \Act{\mathbb{Z}^{k_j}}$, 
    $J\gets \Act{F(x^*)}$, 
  $x \gets x^{k_j}$, $y \gets x^*$, and \linebreak[4] $\Delta\gets\max\left\{\Delta_{k_j},\left\|x^{k_j} - x^*\right\|\right\}$,
  there exists $v(g^{k_j})\in\partialC f(x^*)$ for each $g^{k_j}$ so that\\ 
  $
  \|g^{k_j}-v(g^{k_j})\|\leq c_2\max\left\{\Delta_{k_j},\left\|x^{k_j} - x^*\right\|\right\}
  $
  with $c_2$ defined by~\eqref{eq:c_2}.
  By the acceptability of every iteration indexed by $k_j$,
  $
  \|g^{k_j}-v(g^{k_j})\|\leq c_2 \max\left\{\eta_2\|g^{k_j}\|,\left\|x^{k_j} - x^*\right\|\right\}
  $ holds,
  and so 
  $
  \|v(g^{k_j})\|\leq \max\left\{ (1 + c_2 \eta_2)\|g^{k_j}\| , \|g^{k_j}\| + c_2 \left\|x^{k_j} - x^*\right\| \right\}.
  $
  Moreover, since $\|g^{k_j}\|\to 0$ and $\left\|x^{k_j} - x^*\right\|\to 0$ by assumption, $\left\| v(g^{k_j}) \right\|
  \to 0$. Proposition~7.1.4 in~\cite{Facchinei2003} then yields the
  claimed result by establishing that $\partialC f$ is
  \emph{outer semicontinuous} and therefore $0 \in \partialC f(x^*)$. 
\end{proof}

\begin{theorem} \label{thm:cluster_linear}
  Let \assrefrange{f}{generators} hold. If
  $x^*$ is a cluster point of a sequence $\{x^k\}$ generated by
  \algref{manifoldsampling}, then $0\in\partialC f(x^*)$.
\end{theorem}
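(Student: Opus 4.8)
The plan is to combine the preceding lemmas into a standard trust-region-style argument that upgrades the $\liminf$ result of \lemref{g_to_0_linear} to a statement about arbitrary cluster points. First I would fix a cluster point $x^*$ of $\{x^k\}$, so that there is a subsequence $\{x^{k_i}\}$ converging to $x^*$. If $x^*$ happens to be the limit of a subsequence of iterates whose master model gradients also go to zero, then \lemref{g_to_v_linear} immediately gives $0\in\partialC f(x^*)$ and we are done. The work is therefore entirely in ruling out the alternative: that along every subsequence converging to $x^*$, the norms $\|g^k\|$ stay bounded away from zero.

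So I would argue by contradiction and suppose there exist $\epsilon>0$ and a subsequence $\{x^{k_i}\}\to x^*$ with $\|g^{k_i}\|\ge 2\epsilon$ for all $i$. The key is to use a ``telescoping of function decrease plus the slow movement of iterates on successful steps'' argument, in the spirit of the analyses cited from \cite{LMW16,KLW18}. Concretely: by \lemref{delta_to_0_linear} we have $\Delta_k\to 0$ and $\{f(x^k)\}$ is nonincreasing; since $f$ is bounded below on $\cL(x^0)$ (by \subassref{f}{level}, \subassref{f}{Fval}, \subassref{h}{func}), the sequence $\{f(x^k)\}$ converges, so $f(x^k)-f(x^{k+1})\to 0$. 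On a successful iteration, \eqref{eq:nonincreasing_sequence} gives $f(x^k)-f(x^{k+1}) > \eta_1\frac{\kappad}{2}\|g^k\|\Delta_k - \bigC\Delta_k^2$, and in fact (using \eqref{eq:eta_max} as in the last two lines of \eqref{eq:nonincreasing_sequence}) one gets a bound of the form $f(x^k)-f(x^{k+1}) \ge c\,\|g^k\|\Delta_k$ for a positive constant $c$, at least once $\Delta_k$ is small relative to $\|g^k\|$. Also $\|x^{k+1}-x^k\|\le\Delta_k$ on successful iterations and $x^{k+1}=x^k$ otherwise.

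The heart of the argument is then the usual ``first exit'' construction. By \lemref{g_to_0_linear} there are infinitely many indices where $\|g^k\|<\epsilon$; so for each large $i$ I would let $\ell_i > k_i$ be the first index past $k_i$ with $\|g^{\ell_i}\|<\epsilon$, so that $\|g^k\|\ge\epsilon$ for $k_i\le k<\ell_i$. On this stretch, \lemref{success_when_linear} (together with the acceptability threshold $\Delta_k<\eta_2\|g^k\|$ and the definition of $D$ as in the proof of \lemref{g_to_0_linear}) shows every iteration with $\Delta_k\le D\epsilon$ is successful, and summing the decrease bound $f(x^k)-f(x^{k+1})\ge c\,\epsilon\,\Delta_k$ over the successful iterations in $[k_i,\ell_i)$ controls $\sum_{k=k_i}^{\ell_i-1}\Delta_k$ by $(f(x^{k_i})-f(x^{\ell_i}))/(c\epsilon)\to 0$; a short argument (as in \lemref{delta_to_0_linear}, handling the unsuccessful iterations interleaved in by the factor $\gammai$) bounds the full sum $\sum_{k=k_i}^{\ell_i-1}\Delta_k$, hence $\|x^{\ell_i}-x^{k_i}\|$, by a quantity tending to $0$. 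Therefore $x^{\ell_i}\to x^*$ as well, and along this new subsequence $\|g^{\ell_i}\|<\epsilon$.

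At this point I would invoke \lemref{weird_v_approx} (exactly as in the proof of \lemref{g_to_v_linear}): for $i$ large, $\Act{\mathbb{Z}^{\ell_i}}\subseteq\Act{F(x^*)}$ by the activity-settling identity \eqref{eq:activity_settles} and $\Delta_{\ell_i}\to0$, so there is $v(g^{\ell_i})\in\partialC f(x^*)$ with $\|g^{\ell_i}-v(g^{\ell_i})\|\le c_2\Delta_{\ell_i}$. Since $\Delta_{\ell_i}\to 0$ and $\|g^{\ell_i}\|<\epsilon$, we get $\mathrm{dist}(0,\partialC f(x^*))\le \|v(g^{\ell_i})\|\le \epsilon + c_2\Delta_{\ell_i}$; letting $i\to\infty$ gives $\mathrm{dist}(0,\partialC f(x^*))\le\epsilon$. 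Since $\epsilon>0$ was arbitrary (the contradiction hypothesis can be run for every $\epsilon$), in fact $\mathrm{dist}(0,\partialC f(x^*))=0$, and $\partialC f(x^*)$ is closed, so $0\in\partialC f(x^*)$. (Alternatively, one phrases the whole thing as: either some subsequence to $x^*$ has $g^k\to0$, handled by \lemref{g_to_v_linear}, or the exit-time construction produces such a subsequence anyway.) The main obstacle I anticipate is the bookkeeping in the exit-time step: carefully showing that the accumulated step length $\sum_{k=k_i}^{\ell_i-1}\|x^{k+1}-x^k\|$ is small, which requires separating successful from unsuccessful iterations and using that unsuccessful iterations only shrink $\Delta_k$ (so they are absorbed into the geometric tail controlled by $\gammai$ times the preceding successful radius), exactly the device already used at the end of the proof of \lemref{delta_to_0_linear}. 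Everything else is a direct reuse of \lemref{success_when_linear}, \lemref{g_to_0_linear}, \lemref{weird_v_approx}, and the outer-semicontinuity of $\partialC f$.
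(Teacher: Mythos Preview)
Your approach is correct and is essentially the same as the paper's: both combine the summability coming from \eqref{eq:nonincreasing_sequence} with a first-exit argument to produce a subsequence converging to $x^*$ along which the master-model gradients are small, and then appeal to \lemref{weird_v_approx}/\lemref{g_to_v_linear}. The paper's organization differs only cosmetically---it treats the case of finitely many successful iterations separately, and its first-exit is from the spatial ball $\cB(x^*;\bar\nu)$ rather than from the gradient-norm level $\{\|g^k\|\ge\epsilon\}$---but the mechanics are interchangeable. One wrinkle worth smoothing in your write-up: the ``contradiction'' framing does not actually produce a contradiction; rather, once $\|g^{k_i}\|\ge 2\epsilon$, your exit-time construction runs verbatim for every $\epsilon'\in(0,\epsilon]$, giving $\mathrm{dist}(0,\partialC f(x^*))\le\epsilon'$ for all such $\epsilon'$ and hence the conclusion directly (your parenthetical ``alternative phrasing'' is the clean version).
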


\begin{proof}
  First, suppose that there are only finitely many successful iterations, with
  $k'$ being the last.
  Suppose toward a contradiction that $0\notin\partialC f(x^{k'})$.
  By the same reasoning used to conclude \eqref{eq:activity_settles}, 
  there exists $\bar\Delta>0$ such that 
  $
  \mathbb{A}(F(x^{k'})) = \bigcup_{y \in \cB(F(x^{k'});L_F\Delta)} \mathbb{A}(y) \;
    \mbox{ for all } \Delta \le \bar{\Delta}.
    $

  By assumption,
  $\Delta_k$ decreases by a factor of $\gammad$ in each iteration after
  $k'$ since every iteration after $k'$ is unsuccessful. Thus there
  is a least iteration $k''\geq k'$ such that
  $\Delta_{k''} \le \bar\Delta$.  By \assref{generators}, for each
  $k\geq k''$, $\Act{\mathbb{Z}^k} = \Act{F(x^{k'})}$, and therefore 
  $( \nabla M(x^k) \nabla h_j(F(x^k))) \in\Gen^k$ for all $j\in\mathbb{A}(F(x^k))$. Since $k'$ is the last
  successful iteration, $x^{k} = x^{k'}$ for all $k \ge k'' \ge k'$. 
  Consequently, under \assrefrange{f}{flmodels},
  the conditions for \lemref{weird_v_approx} hold for $x\gets x^{k}$, $y \gets 
x^{k'}$ (noting that $x^k = x^{k'}$) $\Delta\gets 0$, $I \gets \Act{\mathbb{Z}^k}$, and
  $J \gets \Act{F(x^{k'})}$. Thus, for each $k\geq k''$, 
  $g^k\in\partialC f(x^{k'})$.  

  Since $0\notin\partialC f(x^{k'})$ by supposition,
  $v^*\defined\Proj(0,\partialC f(x^{k'}))$ is nonzero, and so
  \begin{equation}\label{eq:v_approx_conclusion_linear}
    \|g^k\|\geq\|v^*\|>0 \qquad
      \mbox{ for all } k\geq k''.
  \end{equation}
  Since $\Delta_k \to 0$, $\Delta_k$ will satisfy the conditions of
  \lemref{success_when_linear} for $k$ sufficiently large: there will be a
  successful iteration contradicting $k'$ being the last. 

  Next, suppose there are infinitely many successful iterations.
  We will demonstrate that there exists a subsequence of successful iterations
  $\{k_j\}$ that simultaneously satisfies both
  $x^{k_j}\to x^* \mbox{ and } \|g^{k_j}\|\to 0.$
  If the sequence $\{x^k\}_{k\in\mathbb{N}}$ converges, then the subsequence 
$\left\{ x^{k_j} \right\}_{j\in\mathbb{N}}$ from \lemref{g_to_0_linear}
  satisfies these two conditions. 
  Otherwise, if the sequence $\{x^k\}$ is not convergent, we will show that 
\linebreak[4] $\liminf_{k\to\infty}(
  \max\{\|x^k-x^*\|,\|g^k\|\})=0$ for each cluster point $x^*$. Suppose toward
  contradiction that there exists $\bar\nu>0$, an iteration $\bar{k}$,
  and a cluster point $x^*$ of the sequence $\{x^k\}$ such that
  $\left\{ x^k \right\}_{k \in \mathcal{K}}$ converges
  to $x^*$ and such that $\|g^k\| > \bar\nu$ 
  for all $k\in \mathcal{K}$,
  where
  $
    \mathcal{K} \defined \{k\colon k\geq\bar{k}, \|x^k-x^*\|\leq \bar\nu\}.
    $
  As an intermediate step in the combination of~\eqref{eq:nonincreasing_sequence} and~\eqref{eq:eta_delta_normg_linear},
  we had shown that
  $\ds\sum_{j=0}^\infty (\eta_1\frac{\kappad}{2}\|g^{k_j}\|\Delta_{k_j} - \bigC\Delta_{k_j}^2) < \infty.$
  Because~\eqref{eq:eta_delta_normg_linear} shows that $\sum_{j=0}^{\infty} \Delta_{k_j}^2$ is finite, 
  we may conclude that
  $\ds\sum_{j=0}^\infty \eta_1\frac{\kappad}{2}\|g^{k_j}\|\Delta_{k_j}  < \infty.$
Thus,
\begin{equation}
    \label{eq:finite_sum_linear}
    \begin{aligned}
      \eta_1\frac{\kappad}{2}\displaystyle\sum_{k\in \mathcal{K}} \|g^k\|\|x^{k+1}-x^k\| \leq &
      \eta_1\frac{\kappad}{2}\displaystyle\sum_{j=0}^\infty \|g^{k_j}\|\|x^{{k_j}+1}-x^{k_j}\| \\ \leq &
      \eta_1\frac{\kappad}{2}\displaystyle\sum_{j=0}^\infty \|g^{k_j}\|\Delta_{k_j}
      < \infty.
    \end{aligned}
  \end{equation}
  Because $\|g^k\|>\bar\nu$ for
  all $k\in \mathcal{K}$, we conclude from~\eqref{eq:finite_sum_linear} that
  \begin{equation}
    \label{eq:finite_sum2_linear}
    \displaystyle\sum_{k\in \mathcal{K}}\|x^{k+1}-x^k\| < \infty.
  \end{equation}

  Because $x^k\not\to x^*$, 
  for any choice of $\hat\nu\in(0,\bar{\nu})$
  the quantity
  \[
    q(k')\defined \min\{\kappa\in\mathbb{N}\colon \kappa>k',\quad \|x^{\kappa}-x^{k'}\| > \hat\nu\}
  \]
  is well defined for any $k'\in\mathcal{K}$. 
  For any $k'\in\mathcal{K}$, $\{ k',
  k' + 1, \ldots, q(k')-1\} \subset \mathcal{K}$.

  From~\eqref{eq:finite_sum2_linear}, there exists $N \in
  \mathbb{N}$ such that 
  $
  \ds\sum_{\substack{k \in \mathcal{K} \\ k \ge N}} \left\| x^{k+1} - x^k \right\| \le
  \hat{\nu}.
  $
  Letting $k' \ge N$ be arbitrary, we arrive at
  \begin{equation*}
    \label{eq:triangle_ineq_linear}
    \hat\nu < \|x^{q(k')}-x^{k'}\| \leq 
    \displaystyle\sum_{i\in\{k',k'+1,\dots, q(k')-1\}} \|x^{i+1}-x^i\| \le 
  \sum_{\substack{k \in \mathcal{K} \\ k \ge N}} \left\| x^{k+1} - x^k \right\| \le
  \hat{\nu} ,
  \end{equation*}
  a contradiction. 
  Thus, $\liminf_{k\to\infty}
  (\max\{\|x^k-x^*\|,\|g^k\|\})=0$ for all cluster points $x^*$.
  By \lemref{g_to_v_linear},
  $0\in\partialC f(x^*)$ for all such subsequences.
\end{proof}

\section{Numerical Experiments}\label{sec:tests}
We now present the performance of an implementation of
\algref{manifoldsampling} for problems of the form \eqref{eq:func_def}.

\subsection{Implementation details}\label{sec:implementation_details}
To study its practical efficiency, we produced a
MATLAB implementation of \algref{manifoldsampling}, which we denote
manifold sampling: general (MSG). 
We outline the specific choices made in our implementation.

We considered two versions of MSG, MSG-1 and MSG-2, 
which provide distinct approaches to 
initializing and updating $\mathbb{Z}^k$ in \linerefa{starting_gen_set}
and \linerefa{grow_Zk} of \algref{manifoldsampling}.
MSG-1 implements \linerefa{starting_gen_set} as
$\mathbb{Z}^k\leftarrow \{ F(x^k) \}$
and \linerefa{grow_Zk} as $\mathbb{Z}^k \gets \mathbb{Z}^k\cup\{z\}$,
while 
MSG-2 implements \linerefa{starting_gen_set} as
$
\mathbb{Z}^k\leftarrow \{ F(x^k) \}\cup \left(Y\cap\{ F(y)\colon y\in\cB(x^k;\Delta_k) \}\right)
$
and \linerefa{grow_Zk} as
$
\mathbb{Z}^k \gets \mathbb{Z}^k \cup \{z\} \cup \left(Y\cap \{F(y)\colon y\in\cB(x^k;\Delta_k) \}\right),
$
where, as in \secref{gensets}, $Y$ is the set of all $y\in\R^n$
previously evaluated during the current run of \algref{manifoldsampling}.

The default parameters of MSG are fixed to $\eta_1 = 0.01$, $\eta_2 = 10^4$,
$\kappad = 10^{-4}$, $\gamma_d = 0.5$, $\gamma_i = 2$, and $\Delta_{\max} =
10^8$.  
We remark that the selection of $\eta_2 = 10^4$ may violate the restriction on $\etamax$ specified 
in~\eqref{eq:eta_max}.  
The bound in \eqref{eq:eta_max} was derived for the sake of worst-case analysis
(see, e.g., \eqref{eq:nonincreasing_sequence_C_nonzero} and \eqref{eq:nonincreasing_sequence_C_zero}),
and we thus expect \eqref{eq:eta_max} to be an unnecessarily conservative restriction in the most general case.
Therefore, for the sake of labeling iterations acceptable more frequently and thus accepting potentially larger trial steps,
we relax the condition in \eqref{eq:eta_max}.
This motivates the addition of a safeguard to MSG;
the criterion in \linerefa{j_in_Act_test} is augmented to test both that $j \in
\mathbb{A}(\mathbb{Z}^k)$ and that  $h(F(x^k + s^k)) < h(F(x^k))$. 
 
We also include some
termination conditions in our implementation of \algref{manifoldsampling}.  
First, the
outer \textbf{for} loop (\linerefa{outer_for_loop}) is terminated if the number of
evaluations of $F$ has exceeded a fixed budget.
MSG also employs a
termination condition before \linerefa{acceptabilityCheck} such that if
$\|g^k\|\leq g_{tol}$ and $\Delta_k \leq \Delta_{\min}$
for some positive constants $g_{tol}$ and $\Delta_{\min}$, then MSG
terminates. 
We fixed $g_{tol}=\Delta_{\min}=10^{-13}$ in our experiments.

We use a MATLAB implementation of GQT~\cite{More553} to compute $s^k$ in our
trust-region subproblems \eqref{eq:trsp}.
We explicitly check whether \eqref{eq:fraction_cauchy_decrease} is satisfied by $s^k$; 
if it is not, then we employ the step prescribed by \lemref{approx_solve}
to ensure that \eqref{eq:fraction_cauchy_decrease} is satisfied. 

For the purposes of model building in \linerefa{build_models} and model updating
in \linerefa{update_models}, we employ the 
 minimum Frobenius norm quadratic interpolation and geometry point selection routines used in 
the implementation of POUNDERS~\cite{Wild14}.
As is frequently seen in practical implementations of trust-region methods, 
MSG additionally modifies the trust-region radius management beginning in \linerefa{successCheck}.
In particular, while trial steps are still accepted, provided $\rho_k > \eta_1$, 
the trust-region radius only increases provided $\rho_k > 0.5$.

\subsection{Test problems}

We benchmark our implementation of \algref{manifoldsampling} on objectives of the form
$f(x) = h(F(x))$, where $F:\R^n\to\R^p$ is derived from the functions in the
Mor\'e--Wild benchmarking test set~\cite{JJMSMW09}, which were originally intended for 
nonlinear least-squares minimization (that is, $h = \|\cdot\|_2^2$). 
This initially gives us 53 problems, as specified by combinations of definitions of $F$ and initial points $x^0$. 
In the test set, the functions $F$ are all differentiable, and all but four are nonconvex.
The dimension of $F$ ranges from $2$ to $65$. 

For our experiments, we define $h$ as a piecewise-quadratic function of the form
\begin{equation}\label{eq:max_quad_def}
  h(z) \defined \max_{j \in \{1,\ldots,l\}} \left\{ h_j(z) \defined \left\| z - z_j \right\|_{Q_j}^2 + b_j \right\}
\end{equation}
defined by $z_j \in \mathbb{R}^p$, $Q_j \in \mathbb{R}^{p \times
p}$, and $b_j \in \mathbb{R}^l$ for $2 \le l \in \mathbb{N}$.
We use the notation that, for a given matrix $Q$, $\left\|y\right\|_{Q} \defined y^\top Qy.$
For each of the 53 functions and starting-point pairs $(F,x^0)$, we 
generated a single random instance of \eqref{eq:max_quad_def} in the following manner.
We first set
$z^j \defined F(y^j)$ for $j\in \{1,\ldots,l\}$, where $y^j$ is drawn uniformly from the ball 
$\{ y\colon \left\| y - x^0 \right\|_{\infty} \le 20 \}$. 
We then randomly generate a positive-definite matrix 
$Q_1$ and negative-definite matrices $Q_2,\ldots,Q_l$.
The positive-definiteness of $Q_1$ ensures that $h\circ F$ is bounded from below. 
We set all $b_j$ to be 0 except $b_1$, which we define as
$
b_1 \defined -2\max_{j \in \{2,\ldots, l\}} \left\{ \left\| F(y^j) - F(y^1)
\right\|_{Q_1}^2 \right\}
$
in order to ensure that $h(F(y^j)) = 0$ for $j \in \{2,\ldots,l\}$. 
This definition of $b_j$ also guarantees that 
 $h_j(z)=h(z)$ for at least one value of $z$ (in particular, $z=F(y^j))$;
 intuitively, for each $j$, we are increasing the likelihood that
  $j\in\Act{F(x^k)}$ for some $x^k$
 evaluated during a given run of an optimization method. 
With this particular random construction, 
stationary measures ought to be small only in neighborhoods of kink points
or at the global maxima of the negative definite quadratics (the latter of which
are not local minima of $h(F(\cdot))$). 

For each $(F,x^0)$ pair in the Mor\'e--Wild benchmark set
 and for each value of $l \in \left\{ 2p, 4p, 8p, 16p  \right\}$ 
 we repeat our random generation scheme five times. This procedure produces
$53\times4\times5=1060$ benchmarking problems. 

\begin{figure}[t]
	\centering
  \hfil
  \includegraphics[width=0.3\textwidth]{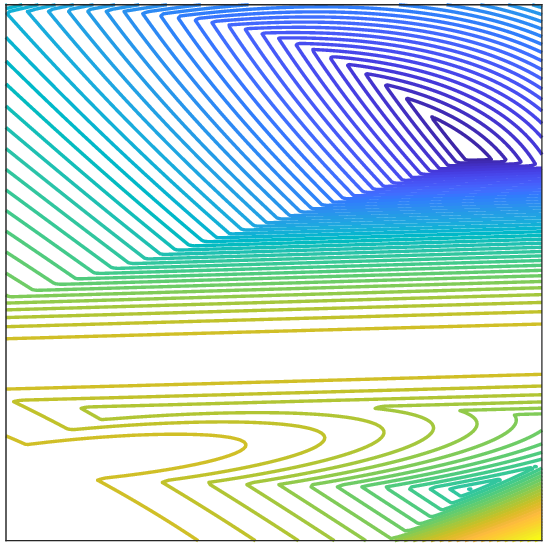}
  \hfil
  \includegraphics[width=0.3\textwidth]{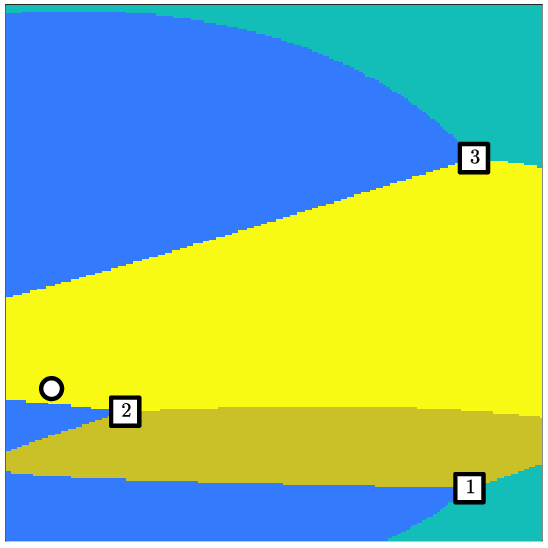}
  \hfil
  \caption{Contour plot (left) and manifold information (right) for one of our test
  functions. The circle shows the starting point, and the squares show the three
  points converged to by the five methods used in our numerical experiments.}
  \label{fig:contour_plot}
\end{figure}

\figref{contour_plot} shows the contour plot of one of our test problems.
We highlight that these problems have multiple potential Clarke stationary
points that have varying values of $h\circ F$.
For this reason, we must compare the performance of different implementations
not only in terms of the best function value obtained but also in terms of a metric designed
to approximate a Clarke stationarity measure. 

\subsection{Comparing performance}
We consider two quantities of interest when comparing methods for solving
problems of the form \eqref{eq:max_quad_def}: the objective value $f$ and approximate
stationary value $\Gamma$. 
With respect to $f$,
we consider a method to have solved a problem $p$
to a level $\tau$ after $t$ function
evaluations, provided the corresponding point $x^t$ satisfies
\begin{equation} \label{eq:f_converged}
  f(x^0) - f(x^t) \ge (1 - \tau) \left( f(x_0) - f^* \right),
\end{equation}
where $x^0$ is a starting point common to all methods and $f^{*}$ is the best-found
function value for all methods being compared. That is, we consider a
problem solved with respect to $f$ when it has found more than $(1- \tau)$ of
the most decrease from
$x^0$ found by any method being compared.

Determining an approximate stationary value $\Gamma(x^t)$ at a point $x^t$
evaluated by a method requires more care. To do so, we randomly generate $50$
points \linebreak[4] $\mathcal{S}^t\subset\cB(x^t,10^{-5})\subset\R^n$ and then compute 
\begin{equation}
\label{eq:gradient_bundle}
\mathcal{G}(x^t)\defined \{\nabla F(s) \nabla h_j(F(s))\colon j\in\Act{F(s)}, \; s\in\mathcal{S}^t\}.
\end{equation}
We then define $\Gamma(x^t) \defined \Proj(0,\cop{\mathcal{G}(x^t)})$. 
The gradient values in \eqref{eq:gradient_bundle} can be computed (in
postprocessing) because $\nabla F$ is computable in closed
form for the problems considered and each $h_j$ is a quadratic function by construction.
We consider a problem to be solved to a level $\tau$  with respect to
$\Gamma(x^t)$ when the minimum-norm element of the convex hull of the sample of gradients
$\mathcal{G}(x^t)$ is less than $\tau$. That is, 
\begin{equation} \label{eq:Gamma_converged}
  \Gamma(x^t) \le \tau. 
\end{equation}

We use data profiles~\cite{JJMSMW09} to compare the performance of methods
for nonsmooth optimization using the problems and metrics defined above.
To construct data profiles, we determine how many evaluations of $F$ are
required by each method to solve a given problem to a level $\tau$ for either 
criterion \eqref{eq:f_converged} or criterion \eqref{eq:Gamma_converged}.
Once a method satisfies the given criterion on any problem for the first time after $t$
evaluations of $F$, its data profile line is incremented by $\frac{1}{1060}$ at
the point $\frac{t}{(n_p+1)}$ (where $n_p$ is the dimension of the problem)
on the horizontal axis. The data profile therefore shows the cumulative fraction
of problems solved by each method as a function of the number of evaluations of $F$
(scaled by $n_p + 1$).

\subsection{Utilizing nearly active manifolds}
In preliminary experiments, we identified the following practical modification that may
be made to an implementation of \algref{manifoldsampling}:
it can be useful to slightly alter the definition of $\Act{z}$.
For example, given $h$ of the form \eqref{eq:max_quad_def}, 
consider a
$z$ such that $h(z) = h_1(z)$ but $\left| h_2(z)
- h_1(z) \right|\approx 0$.
Although this is generally insufficient evidence to conclude
the existence of $z'$ in a neighborhood of $z$ such that 
$h(z')=h_2(z')$, it could potentially be 
beneficial for an implementation of \algref{manifoldsampling}
to allow $\Act{z} = \{1,2\}$ instead of $\Act{z}=\{1\}$.  
In the event that a $z'$ does exist realizing $\Act{z'}=\{2\}$, 
having $\Gen^k$ include this phantom information may permit 
a manifold sampling loop to terminate earlier than it would have otherwise. 

Altering the definition of $\Act{z}$ 
will not affect the theoretical convergence of our algorithm, provided 
$\Act{F(x^{k})}\subseteq\Act{F(x^*)}$ as $\Delta_k \to 0$. 
To demonstrate the effects of changing the definition of $\Act{z}$ by including \emph{nearly active} 
manifolds in the definition of activity for problems of the form \eqref{eq:max_quad_def}, we
introduce a parameter $\sigma$ 
and consider instead
$\mathbb{A}_{\sigma,\Delta_k}(z) \defined \left\{ j: |h(z) - h_j(z)| \leq \min\{\sigma,\Delta\}\right\}.$

\begin{figure}[t]
	\centering
  \includegraphics[width=0.47\textwidth]{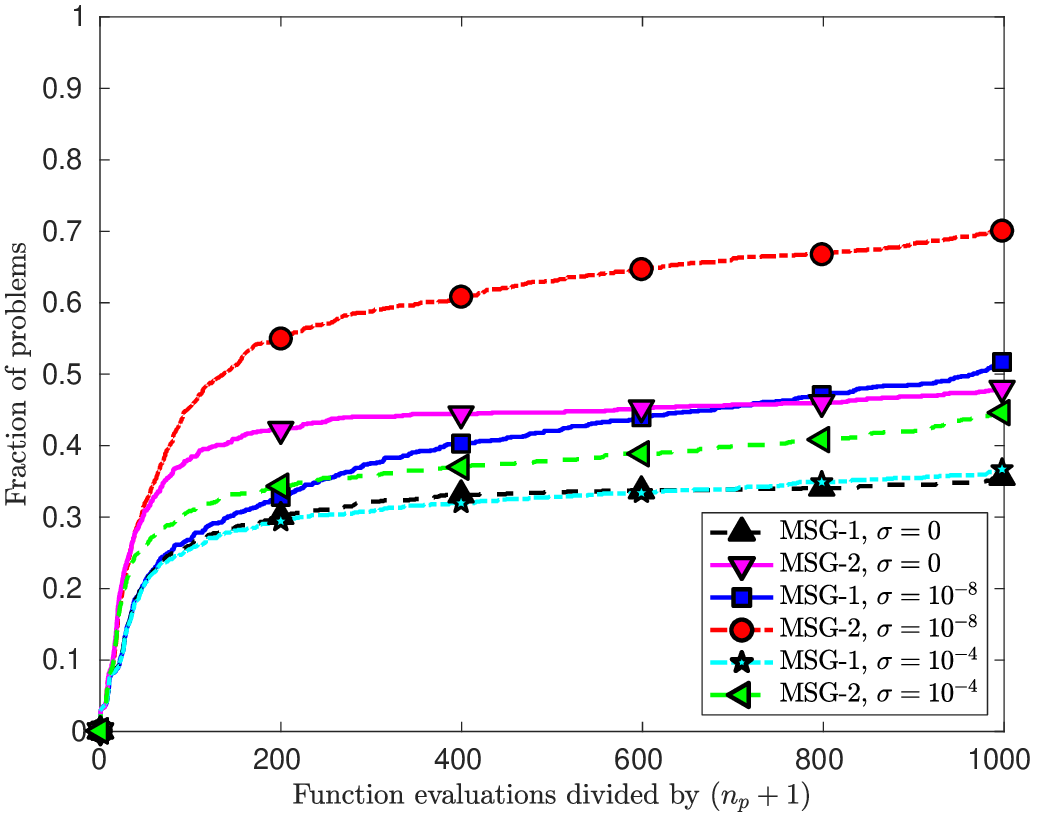}\hfil
  \includegraphics[width=0.47\textwidth]{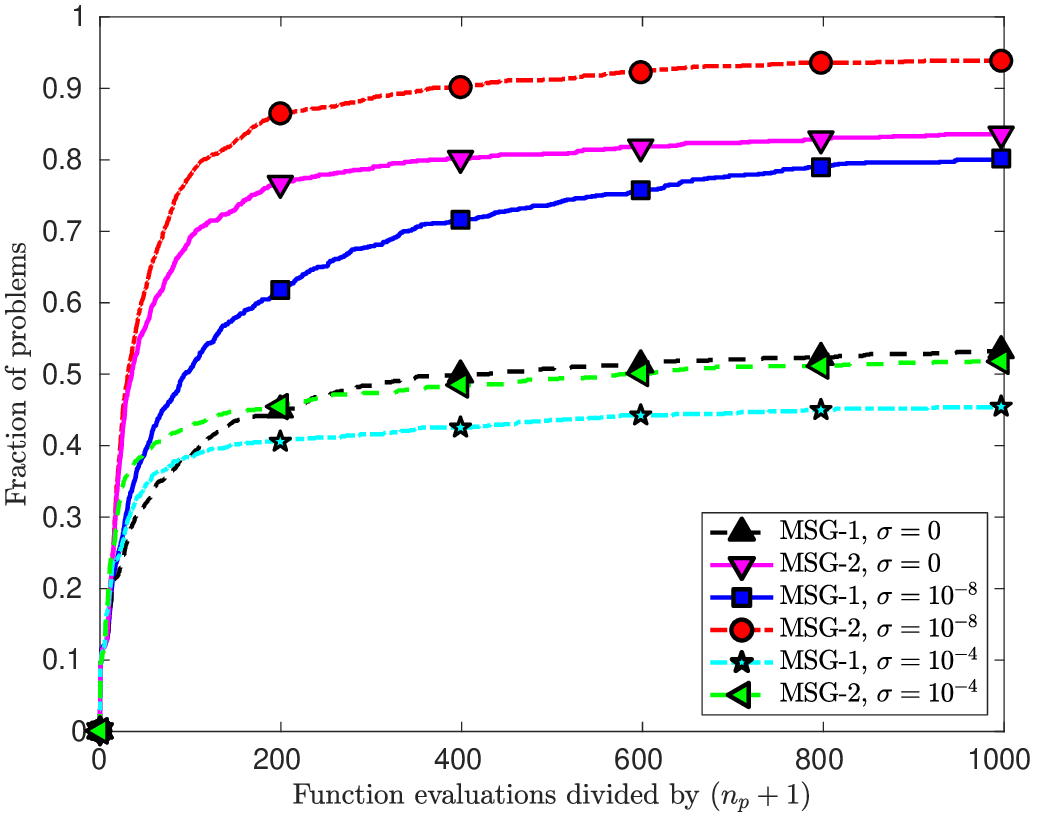}
  \caption{Data profiles using function values (left) and approximate stationary measure $\Gamma$ (right) for $\tau = 10^{-3}$.}
	\label{fig:h_act_tol}
\end{figure}
We show in 
\figref{h_act_tol}
data profiles of both MSG-1 and MSG-2, replacing the definition of $\Act{z}$ with $\mathbb{A}_{\sigma,\Delta_k}(z)$ for values
$\sigma \in \{ 0,10^{-4},10^{-8}\}$. Notice that $\mathbb{A}_{0,\Delta_k}(z) = \Act{z}$ as defined in \defref{pc1manifold}.

While investigating the relatively worse behavior of MSG-1, we found that (perhaps unsurprisingly)
many iterations are spent
rediscovering manifolds that had been identified on previous
iterations. On the other hand, MSG-2, with its memory of recent manifolds encoded in $Y$, 
begins iterations with more of the manifold information it needs to find descent.

We observe a marked improvement in increasing $\sigma$
from 0 to $10^{-8}$.
When $\sigma=0$, a sample point may be close to---but not exactly on---a
place where multiple quadratics define $h$. 
We hypothesize that setting $\sigma$ to a small but nonzero
value (in this case, $10^{-8}$) allows MSG to exploit knowledge of 
multiple nearly active manifolds near kinks, which are the locations of stationary points
of our test set by construction. 
However,
setting $\sigma$ to be too large degrades performance, likely because of too many
inactive manifolds being used by MSG.
Based on this initial tuning, we set the parameter $\sigma=10^{-8}$ for both MSG-1 and MSG-2
throughout the remainder of our numerical experiments, thus replacing
$\Act{z}$ with $\mathbb{A}_{10^{-8},\Delta_k}(z).$

\subsection{Comparisons with other methods}

\begin{figure}[t]
	\centering
  \includegraphics[width=0.45\textwidth]{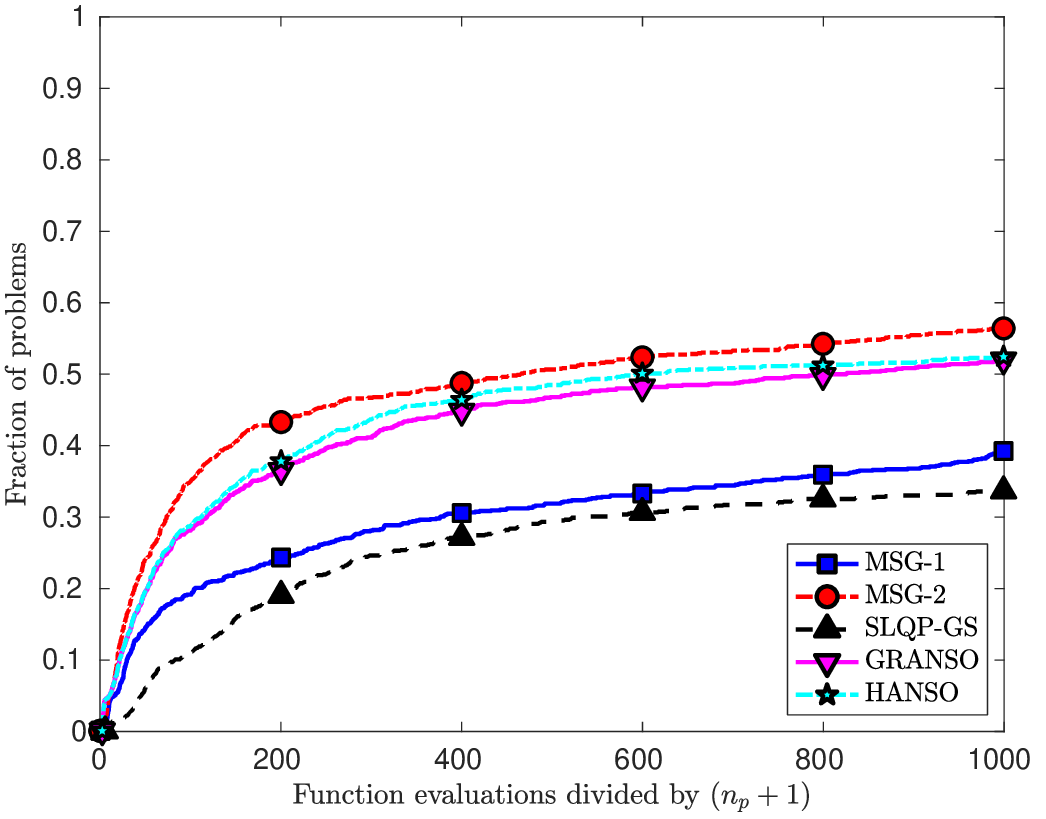} \hfil \includegraphics[width=0.45\textwidth]{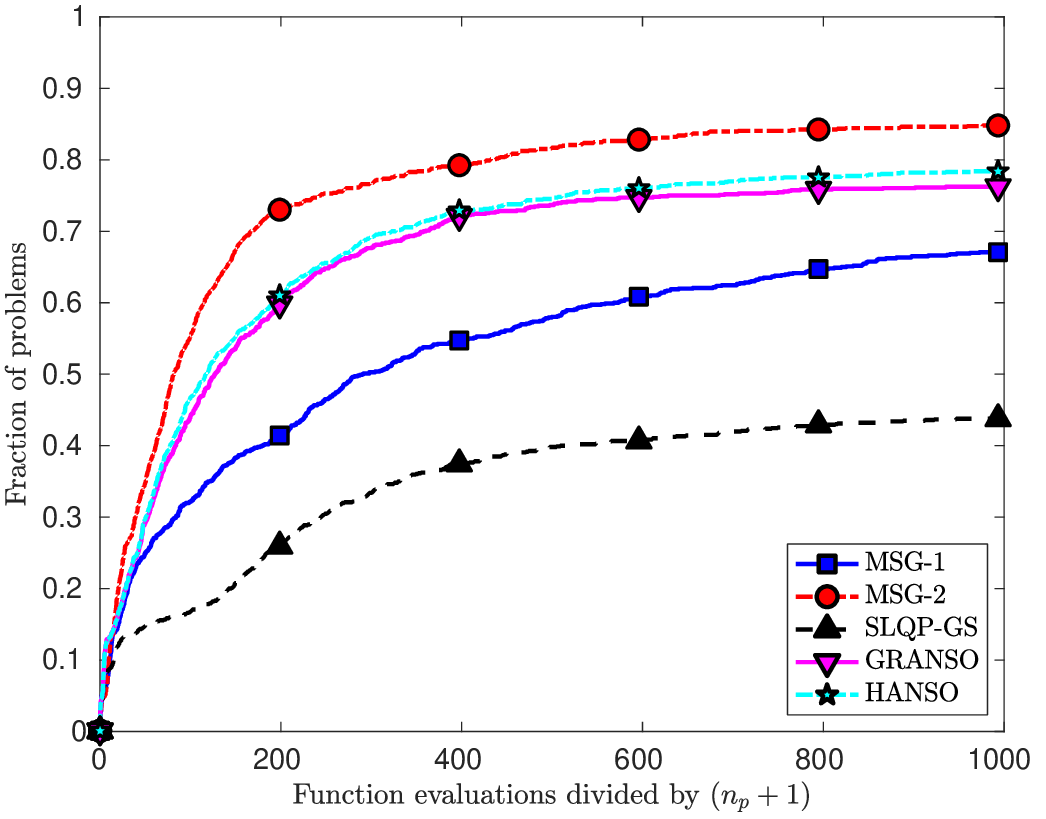}
	\caption{Data profiles using function values with $\tau = 10^{-3}$ (left) and using the approximate stationary measure $\Gamma$ for $\tau = 10^{-5}$ (right).}
	\label{fig:experiments_grad}
\end{figure}
We compare MSG with other nonsmooth optimization methods
that require an oracle for the computation of $\nabla F$. 
These are
GRANSO, HANSO, and SLQP-GS. 
The GRadient-based Algorithm for Non-Smooth
Optimization (GRANSO) employs a sequential quadratic optimization method with
Broyden--Fletcher--Goldfarb--Shanno (BFGS) approximate Hessian
updates~\cite{curtis2017bfgs}. GRANSO was run with its default settings.
The Hybrid Algorithm for Non-Smooth
Optimization (HANSO)
implements BFGS and gradient sampling methods together with a bundle
method~\cite{overton2009hanso}. HANSO is hard-coded to perform at most 100
gradient sampling iterations since such iterations can be expensive; this cap was
removed.
Sequential Linear or Quadratic
Programming
with Gradient Sampling (SLQP-GS) has two modes: the sequential linear programming mode
and the sequential quadratic programming mode, where gradient information is
always obtained by a sampling process to compute search directions
efficiently~\cite{curtis2012sequential}. We use the sequential quadratic mode in our experiments.

\figref{experiments_grad} (left) shows a data profile in terms of 
decrease in $f$; that is, the definition of solved is determined by~\eqref{eq:f_converged}. 
As previously remarked, however, function values may be insufficient to describe the performance 
of methods on our benchmarking test set, and so 
the right plot in
\figref{experiments_grad} shows data profiles in terms of 
$\Gamma$, and the definition of solved is determined by~\eqref{eq:Gamma_converged}.

From \figref{experiments_grad}, we may conclude that MSG-2 outperforms MSG-1.
It is remarkable that without requiring any values of $\nabla F$, MSG-2
exhibits competitive performance with HANSO and GRANSO.  
We also note that
MSG-1 and MSG-2 both outperform SLQP-GS in our experiments.

\section{Discussion}\label{sec:conclusion}
We note that the objective function in~\eqref{eq:func_def} could involve an additional
summand $\psi(x)$ (that is, we could redefine $f(x)\defined \psi(x)+h(F(x))$) 
for some $\psi\colon\R^n\to\R$ assumed continuously differentiable and bounded below.
Our analysis could easily be extended to apply to such functions.

One naturally desires a worst-case complexity rate for manifold sampling
algorithms. While such analysis may be possible, it would rely critically on the
per-iteration cost of the manifold sampling loop identifying the selection
functions active in the current trust region. For the worst case, one can
construct examples where all selection functions need to be identified. Similar
concerns may explain why worst-case complexity results have not yet
been demonstrated for gradient sampling methods.

\appendix

\section{Additional Results}\label{sec:appendix}
Here we provide an additional lemma as a reference for the proofs of the lemmas in \secref{prelims}.

\begin{lemma}\label{lem:directional_derivative_inf}
	Let \assref{h} hold. Let $z(\alpha)$ be as in~\eqref{eq:zalpha}.  Then\\
	$
	h(F(x)) - h(F(x+s)) \geq \int_0^1 \inf_{j\in\mathbb{A}(z(\alpha))}\{ \nabla h_j(z(\alpha))^\trans (F(x) - F(x+s)) \} d\alpha.
	$
\end{lemma}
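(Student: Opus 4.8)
The plan is to reduce the two--dimensional statement to a one--dimensional one by restricting $h$ to the line segment traced by $z(\alpha)$. Write $d \defined F(x) - F(x+s)$ and $\phi(\alpha) \defined h(z(\alpha))$, and note $z(0) = F(x+s)$, $z(1) = F(x)$, and $z'(\alpha) \equiv d$. Since $h$ is Lipschitz on $\hdom$ with constant $L_h$ (Proposition~4.1.2 of \cite{Scholtes2012}; cf.\ \subassref{h}{func}) and $\alpha \mapsto z(\alpha)$ is affine, $\phi$ is Lipschitz on $[0,1]$, hence absolutely continuous; therefore $\phi'(\alpha)$ exists for a.e.\ $\alpha$ and $h(F(x)) - h(F(x+s)) = \phi(1)-\phi(0) = \int_0^1 \phi'(\alpha)\,d\alpha$. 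It then suffices to prove the pointwise bound $\phi'(\alpha) \ge \inf_{j\in\Act{z(\alpha)}}\nabla h_j(z(\alpha))^\trans d$ at a.e.\ $\alpha$ where $\phi'(\alpha)$ exists, and integrate.

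The heart of the proof is identifying $\phi'(\alpha)$ with the directional slope, in direction $d$, of an essentially active selection function. I would fix $\alpha_0\in(0,1)$ at which $\phi'(\alpha_0)$ exists, and invoke the standard local description of continuous selections (see, e.g., \cite{Scholtes2012}): there is a neighborhood $N$ of $z(\alpha_0)$ on which $h$ is a continuous selection of only its essentially active pieces at $z(\alpha_0)$, i.e., $h(w)\in\{h_j(w)\colon j\in\Act{z(\alpha_0)}\}$ for all $w\in N$. Choosing $\delta>0$ with $z(\alpha)\in N$ whenever $|\alpha-\alpha_0|<\delta$, and using that $\Hdef$ is finite, a pigeonhole argument yields a single index $j^*\in\Act{z(\alpha_0)}$ and a sequence $\alpha_n\to\alpha_0$ with $\alpha_n\neq\alpha_0$ and $h(z(\alpha_n)) = h_{j^*}(z(\alpha_n))$ for every $n$; continuity of $h$ and $h_{j^*}$ forces $h(z(\alpha_0)) = h_{j^*}(z(\alpha_0))$ as well. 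Since $h_{j^*}$ is differentiable at $z(\alpha_0)$ by \subassref{h}{grad}, evaluating the (assumed to exist) derivative $\phi'(\alpha_0)$ along this sequence gives $\phi'(\alpha_0) = \lim_{n\to\infty}\frac{h_{j^*}(z(\alpha_n)) - h_{j^*}(z(\alpha_0))}{\alpha_n - \alpha_0} = \nabla h_{j^*}(z(\alpha_0))^\trans d$. As $j^*\in\Act{z(\alpha_0)}$, this value is at least $\inf_{j\in\Act{z(\alpha_0)}}\nabla h_j(z(\alpha_0))^\trans d$, giving the pointwise bound.

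Finally I would check measurability of the integrand $\alpha\mapsto\inf_{j\in\Act{z(\alpha)}}\nabla h_j(z(\alpha))^\trans d$: each $\tilde{\cS}_j$ is closed (see \defref{pc1manifold}), so $\{\alpha\colon j\in\Act{z(\alpha)}\}$ is closed, $\Act{z(\alpha)}$ takes only finitely many values, and on each level set the integrand is a minimum of finitely many continuous functions; it is therefore Borel measurable and bounded (by $L_h\|d\|$), so the integral is well defined and the pointwise inequality integrates to the claim. The main obstacle I anticipate is pinning down precisely the structural fact about continuous selections invoked above---that locally a continuous selection uses only its essentially active pieces---and verifying that the null set on which $\phi'$ fails to exist is genuinely harmless; the rest is routine. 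It is also worth noting that $\Act{z(\alpha)}$ is nonempty along the segment (so the infimum is finite), which follows either from a Baire--category argument (a finite closed cover of an open set has a member with nonempty interior) or directly from the identity $\phi'(\alpha_0)=\nabla h_{j^*}(z(\alpha_0))^\trans d$ established for a.e.\ $\alpha_0$.
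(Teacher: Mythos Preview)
Your argument is correct, and it takes a genuinely different route from the paper's. The paper proceeds combinatorially: it partitions $[0,1)$ into sets $\mathcal{L}_i$ according to which index $i$ realises $\min\{j:j\in\Act{z(\beta)}\}$ on a right-neighbourhood of $\alpha$, decomposes each $\mathcal{L}_i$ into (possibly infinitely many) maximal half-open subintervals on which $h\circ z$ coincides with $h_i\circ z$, integrates $\nabla h_i(z(\alpha))^\trans d$ exactly on each subinterval, and then passes to the infimum. Your approach instead uses the absolute continuity of $\phi=h\circ z$, shows the pointwise inequality $\phi'(\alpha_0)\ge\inf_{j\in\Act{z(\alpha_0)}}\nabla h_j(z(\alpha_0))^\trans d$ at a.e.\ $\alpha_0$ via the local reduction $\Act{w}\subseteq\Act{z(\alpha_0)}$ for $w$ near $z(\alpha_0)$ together with a pigeonhole/subsequence argument, and integrates. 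The paper's decomposition avoids any appeal to Lebesgue differentiation but carries a real burden in verifying that the $\mathcal{L}_i$ truly partition $[0,1)$ and that the maximal-interval decomposition telescopes correctly; your route sidesteps that bookkeeping entirely at the cost of a measurability check and the (standard) structural fact about essentially active selections, which you correctly identify and justify. Both arguments ultimately rest on the same local fact---that near any point only essentially active pieces can realise $h$---but package it differently.
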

\begin{proof}
	By Assumptions~\ref{ass:h}.\ref{subass:h}~and~\ref{ass:h}.\ref{subass:Ieh}, we can define sets $\mathcal{L}_1,\ldots,\mathcal{L}_N$ by\\
  $
	\mathcal{L}_i = \{ \alpha\in [0,1)\colon \exists \tau> 0 \text{ such that } i = \min\{j\colon j\in\mathbb{A}(z(\beta)) \} \; \forall \beta\in [\alpha,\alpha+\tau) \}.
  $
	
	By definition, it is immediate that $\{\mathcal{L}_i\}_{i=1}^N$ form a partition of $[0,1)$.
	
	For any interval $[q,r)\subseteq [0,1)$ such that $[q,r)\subseteq \mathcal{L}_i$ for some $i=1,\ldots,N$,
	\begin{equation*}
	\begin{aligned}
	&h(z(r)) - h(z(q))\\
	= &h_i(F(x+s) + r(F(x) - F(x+s))) - h_i(F(x+s) + q(F(x) - F(x+s))) \\
	= &\int_q^r \nabla h_i(z(\alpha))^\trans (F(x) - F(x+s)) d\alpha \\
	=  &\int_q^r \inf_{i\colon \alpha\in\mathcal{L}_i} \{ \nabla h_i(z(\alpha))^\trans (F(x) - F(x+s)) \} d\alpha \\
	\geq &\int_q^r \inf_{i\in\mathbb{A}(z(\alpha))} \{ \nabla h_i(z(\alpha))^\trans (F(x) - F(x+s)) \} d\alpha.
	\end{aligned}
	\end{equation*}
	Define a \emph{maximal interval} as any half-open interval $[q,r)\subset\mathcal{L}_i$ such that 
	$[q-\tau,r) \not\subset \mathcal{L}_i$ and $[q,r+\tau)\not\subset \mathcal{L}_i$ for all $\tau > 0$.
	Let $\Lambda_i$ denote the union of maximal half-open intervals $[q_{\ell},r_{\ell})\in\mathcal{L}_i$.
	Then,
	\begin{equation*}
	\begin{aligned}
  h(F(x)) - h(F(x+s)) 
	= &\sum_{j=1}^N\sum_{\ell\in\Lambda_j} h(z(r_{\ell})) - h(z(q_{\ell})) \\
	\geq &\sum_{j=1}^N\sum_{\ell\in\Lambda_j}\int_{q_{\ell}}^{r_{\ell}} \inf_{i\in\mathbb{A}(z(\alpha))} \{ \nabla h_i(z(\alpha))^\trans (F(x) - F(x+s)) \} d\alpha \\
	= &\int_0^1 \inf_{i\in\mathbb{A}(z(\alpha))} \{ \nabla h_i(z(\alpha))^\trans (F(x) - F(x+s)) \} d\alpha.
	\end{aligned}
	\end{equation*}
\end{proof}

\section*{Bisection search algorithm}
Here we provide a bisection search algorithm as an alternative to
\algref{gridsearch}
for use in \linerefb{linearization} in
\algref{manifoldsampling}.
Although we have not been able to prove a result analogous to \lemref{candidate_search_lemma},
\algref{bisecsearch} is the search algorithm that we implement in practice. 
In our numerical experiments, \algref{bisecsearch} always terminated successfully. 

\begin{algorithm2e}[h]
  \fontsize{8}{8}\selectfont
	\caption{Bisection Search for $\nabla h_j(z)$ \label{alg:bisecsearch}}
  \DontPrintSemicolon 
	\SetAlgoNlRelativeSize{-5}
	\SetKw{true}{true}
	\SetKw{break}{break}
	\lIf{$F(x)$ and some $j$ satisfy~\eqref{eq:baoyu_condition}}{return $\nabla h_j(F(x))$} 
	\lIf{$F(x+s)$ and some $j$ satisfy~\eqref{eq:baoyu_condition}}{return $\nabla h_j(F(x+s))$} 
	Set $\alpha \gets 0$, $\beta \gets 1$
	
	\While{\true}
	{ Set $z(\frac{\alpha+\beta}{2}) \gets \frac{\alpha+\beta}{2}F(x) + (1-\frac{\alpha+\beta}{2})F(x+s)$\\
		\lIf{$z(\frac{\alpha+\beta}{2})$ and some $j$ satisfy~\eqref{eq:baoyu_condition}}{return $\nabla h_j(z(\frac{\alpha+\beta}{2}))$}
		{
			\uElseIf{$h(z(\frac{\alpha+\beta}{2})) > \frac{\alpha+\beta}{2}h(F(x)) + (1-\frac{\alpha+\beta}{2})h(F(x+s))$}
			{$\beta\gets \frac{\alpha+\beta}{2}$}
      \Else{$\alpha\gets \frac{\alpha+\beta}{2}$}
		}
	}
\end{algorithm2e}

\section*{Acknowledgments}
This work was supported by the U.S.~Department of Energy, Office of Science,
Advanced Scientific Computing Research, under Contract DE-AC02-06CH11357.
Support for this work was also provided through the SciDAC program funded by the
U.S.~Department of Energy, Office of Science, Advanced Scientific Computing Research.
We thank Tim Mitchell for assistance in improving the performance of GRANSO and HANSO.
We are grateful for the comments from the three anonymous reviewers that 
improved an early version of this manuscript.

\FloatBarrier
\bibliographystyle{siamplain}
\bibliography{../../bibs/refs}

%%%%%%%%%%%%%%%%%%%%%%%%%%%%%%%%%%%%%%%%%%%%%%%%%%%%%%%%%%%%%%%%%%%%%%%%
\vfill
\framebox{\parbox{.90\linewidth}{\scriptsize The submitted manuscript has been created by
UChicago Argonne, LLC, Operator of Argonne National Laboratory (``Argonne'').
Argonne, a U.S.\ Department of Energy Office of Science laboratory, is operated
under Contract No.\ DE-AC02-06CH11357.  The U.S.\ Government retains for itself,
and others acting on its behalf, a paid-up nonexclusive, irrevocable worldwide
license in said article to reproduce, prepare derivative works, distribute
copies to the public, and perform publicly and display publicly, by or on
behalf of the Government.  The Department of Energy will provide public access
to these results of federally sponsored research in accordance with the DOE
Public Access Plan \url{http://energy.gov/downloads/doe-public-access-plan}.}}
\end{document}